\documentclass[12pt]{article}
\usepackage[centertags]{amsmath}
\usepackage{amsfonts}
\usepackage{amssymb,amsmath,amsfonts}
\usepackage{booktabs}
\usepackage{array, color}
\usepackage{placeins}
\usepackage{float}
\usepackage{relsize}
\usepackage{lipsum}
\usepackage{graphicx}
\usepackage{subcaption}
\usepackage{mathrsfs} 
\usepackage{float}
\restylefloat{table}
\usepackage{cite}
\usepackage{derivative}
\usepackage{multirow}
\usepackage{undertilde}
\usepackage{amssymb,amsmath,amsfonts,fancyhdr}
\usepackage{amsmath}
\usepackage{bigints}
\numberwithin{equation}{section}
\usepackage{amsthm}
\usepackage{mathtools}

\usepackage[utf8]{inputenc}
\usepackage[english]{babel}
\newtheorem{theorem}{Theorem}[section]

\usepackage{bm}
\usepackage{amssymb}

\usepackage{booktabs}
\usepackage{color}
\usepackage{subfloat}
\usepackage{hyperref}
\usepackage{multirow}

\usepackage{lscape}
\usepackage{natbib}
\makeatletter
\newcommand{\thickhline}{%
    \noalign {\ifnum 0=`}\fi \hrule height 1pt
    \futurelet \reserved@a \@xhline
}
\newcolumntype{"}{@{\hskip\tabcolsep\vrule width 1pt\hskip\tabcolsep}}
\makeatother
\parindent 0.5 cm
\makeatletter \oddsidemargin  -.1in \evensidemargin -.1in
\textwidth 11.5cm \topmargin 0.0cm \textheight 19.5cm
\setlength\textheight{8.5 in} \setlength\textwidth{6.75in}
\setlength\topmargin{0.0in}
\newcommand{\doublespacing}{\let\CS=\@currsize
 \renewcommand{\baselinestretch}{1.05}\tiny\CS}
\begin{document}
\newcommand{\bea}{\begin{eqnarray}}
\newcommand{\eea}{\end{eqnarray}}
\newcommand{\nn}{\nonumber}
\newcommand{\bee}{\begin{eqnarray*}}
\newcommand{\eee}{\end{eqnarray*}}
\newcommand{\lb}{\label}
\newcommand{\nii}{\noindent}
\newcommand{\ii}{\indent}
\newtheorem{thm}{Theorem}[section]
\newtheorem{lem}{Lemma}[section]
\newtheorem{rem}{Remark}[section]
\theoremstyle{remark}
\renewcommand{\theequation}{\thesection.\arabic{equation}}
\date{}
\vspace{5cm}
\title{\bf On Estimating the Selected Treatment Mean under a Two-Stage Adaptive Design}
\author{Masihuddin$^{1}$\thanks{Email : masih.iitk@gmail.com, masihst@iitk.ac.in}
	~~	
	and~	Neeraj Misra$^{2}$\thanks{Email : neeraj@iitk.ac.in}}
\maketitle \noindent {\it $^{1,2}$ Department of Mathematics \& Statistics, Indian
Institute of Technology Kanpur, Kanpur-208016, Uttar Pradesh, India} 

\newcommand{\oddhead}{ Estimation of the Selected Treatment Mean}
\renewcommand{\@oddhead}
{\hspace*{-3pt}\raisebox{-3pt}[\headheight][0pt]
{\vbox{\hbox to \textwidth
{\hfill\oddhead}\vskip8pt}}}
\vspace*{0.05in}
\noindent {\bf Abstract}: 
Adaptive designs are commonly used in clinical and drug development studies for optimum utilization of available resources. In this article, we consider the problem of estimating the effect of the selected (better) treatment using a two-stage adaptive design. Consider two treatments with their effectiveness characterized by two normal distributions having different unknown means and a common unknown variance. The treatment associated with the larger mean effect is labeled as the better treatment. In the first stage of the design, each of the two treatments is independently administered to different sets of $n_1$ subjects, and the treatment with the larger sample mean is chosen as the better treatment. In the second stage, the selected treatment is further administered to $n_2$ additional subjects. In this article, we deal with the problem of estimating the mean of the selected treatment using the above adaptive design. We extend the result of \cite{cohen1989two} by obtaining the uniformly minimum variance conditionally unbiased estimator (UMVCUE) of the mean effect of the selected treatment when multiple observations are available in the second stage. We show that the maximum likelihood estimator (a weighted sample average based on the first and the second stage data) is minimax and admissible for estimating the mean effect of the selected treatment.  We also propose some plug-in estimators obtained by plugging in the pooled sample variance in place of the common variance $\sigma^2$, in some of the estimators proposed by \cite{misra2022estimation} for the situations where $\sigma^2$ is known. The performances of various estimators of the mean effect of the selected treatment are compared via a simulation study. For the illustration purpose, we also provide a real-data application. \vspace{2mm}\\
\noindent {\it AMS 2010 SUBJECT CLASSIFICATIONS:} 62F07 · 62F10 · 62C20\\

 \noindent {\it Keywords and Phrases:}~Two-stage adaptive design; selected treatment; scaled mean squared error; scaled bias; minimax; UMVCUE; MLE; inadmissible estimator.
\newcommand\blfootnote[1]{%
	\begingroup
	\renewcommand\thefootnote{}\footnote{#1}%
	\addtocounter{footnote}{-1}%
	\endgroup
}\blfootnote{
}
\section{Introduction}
Adaptive designs have a growing importance in clinical drug discovery and development. In clinical studies, multiple new treatments are often of interest for evaluation but, due to limited resources (time, available patients, budget, etc.), only one or two with the best-observed response(s) can be selected for further assessment in a large-scale clinical trial. Two-stage adaptive designs mainly deal with finding a safe and effective treatment among multiple candidate treatments in stage 1 and then validating its properties using an independent sample in stage 2. \par 
For a thorough insightful overview on adaptive designs in clinical trials, the reader is referred to \cite{pallmann2018adaptive}. For an extensive discussion on inference procedures in two-stage adaptive designs, one may refer to \cite{bauer1999combining}, \cite{sampson2005drop}, \cite{stallard2008group}, \cite{bowden2008unbiased}, \cite{carreras2013shrinkage}, \cite{kimani2013conditionally}, \cite{chiu2018design}, \cite{kimani2020point} and \cite{robertson2022point}. \par  
In this paper, we consider a two-stage adaptive design comprising two stages with selection of a candidate for the better treatment in the first stage and estimation of the mean treatment effect of the selected treatment in the second stage.
It is well known that, using a single-stage data alone, there does not exist any unbiased estimator of the selected mean in many cases. For example, the selected means of normal and binomial populations are not unbiasedly estimable (see \cite{putter1968estimating} and \cite{tappin1992unbiased}). However, the naive estimates that incorporate data from both the stages can induce selection bias. To overcome this issue, the technique of Rao-Blackwellization can be utilized. Using this technique, the unbiased second stage sample mean is conditioned on a complete-sufficient statistic. As a result, a uniformly minimum variance conditionally unbiased estimator (UMVCUE) is obtained. An appealing property of the UMVCUE is that it has the smallest variance (or, in other words, the smallest mean squared error  (MSE)) among all the conditionally unbiased estimators of the selected mean. \par 

Initially, \cite{cohen1989two} dealt with the two-stage estimation of the selected treatment mean under the ranking and selection framework in the Gaussian setting. They separately considered the cases of known and unknown variance. The authors obtained the UMVCUE for the selected normal mean which uses data from both the stages. A limitation of \cite{cohen1989two} work is that it assumes single observation in the second stage of the adaptive design. Since then, their work has been extended by many researchers including \cite{tappin1992unbiased} and \cite{sampson2005drop}. For the case of common known variance, \cite{bowden2008unbiased} extended the UMVCUE to account for unequal stage one and stage two sample sizes, where the parameter of interest is the $j^{th}$ best among the $k$ candidates. \cite{kimani2013conditionally} considered point estimation of the selected most effective treatment compared with a control, after a two-stage adaptive seamless trial in which treatment selection and the possibility of early stopping for futility are available at stage 1. Using a multistage analog of the two-stage drop-the-losers design, \cite{bowden2014conditionally} provided unbiased and near unbiased estimates for the selected mean. \par
 In many practical situations, it may not be appropriate to assume the variances of the treatment effects to be known. For the case of common unknown variance, building upon the work of \cite{cohen1989two}, we derive the UMVCUE for the selected treatment mean when there are more than one observations in the second stage of the adaptive design. Although, \cite{robertson2019conditionally} had claimed that their UMVCUE works for multiple observations in the second stage, their approach is not optimal as they have conditioned the second stage data on a statistic which is not a complete-sufficient statistic. Our extended UMVCUE takes care of this deficiency. We have also obtained a minimax estimator for the selected treatment mean under the scaled mean squared error criterion. This minimax estimator is also shown to be admissible. \par 
The remainder of the paper is organized as follows: In Section 2, we introduce some notations and preliminaries that will be used all across the paper. In Subsection 2.1, we derive the UMVCUE of the selected treatment mean. In Section 3, we prove that the naive estimator, which is the weighted average of the first and the second stage sample means, is  minimax and admissible for estimating the selected treatment mean in case of common unknown variance. In Subsection 3.1, we provide some additional estimators for the selected treatment mean. In order to have  numerical assessment of the performances of various competing estimators under the criterion of the scaled mean squared error and the scaled 
bias, we provide a simulation study in Section 4. For illustration, a real data set has also been considered  in Section 5 of the paper.
  
\section{Estimation of the selected treatment mean}
The notations listed below will be used throughout the article:\\
\begin{itemize}
	\item[ $\bullet$] $\mathbb{R}$: the real line $\left(-\infty,\infty\right)$;
	\item[ $\bullet$] $\mathbb{R}^k$: the $k$ dimensional Euclidean space, $k \in \{2,3,\ldots\}$;
	\item[ $\bullet$] $N\left(\theta, \sigma^2 \right)$: normal distribution with mean $\theta\in \mathbb{R}$ and standard deviation $\sigma \in \left(0,\infty\right) $;
	\item[ $\bullet$] $\phi(\cdot) $: probability density function (p.d.f.) of $N(0,1)$;
	\item[ $\bullet$] $\Phi(\cdot)$: cumulative distribution function (c.d.f.) of $N(0,1)$;
	\item[ $\bullet$] $Beta (a,b)$: beta distribution with shape parameters $a>0$ and $b>0$;
	\item[ $\bullet$] $B(\alpha,\beta)=\frac{\Gamma(\alpha) \Gamma(\beta)}{\Gamma(\alpha+\beta)}$, $\alpha >0$, $\beta>0$, will denote the usual beta function and $\Gamma (\cdot )$ will denote the usual gamma function;
	\item[ $\bullet$] For real numbers  $x$ and $y$
	\begin{equation*}
		\label{S1.E1}
		I(x\geq y)=\begin{cases}
			1, & \text{if $x \geq y$ }\\
			0, & \text{if $x < y$}.
		\end{cases}
	\end{equation*}
 $I(x > y)$ is defined similarly.
\end{itemize}
Consider two treatments say, $\tau_1$ and $\tau_2$, such that effectiveness of the treatment $\tau_i$ is described by $N$$\left(\mu_i, \sigma^2 \right),~ i=1,2$; where $\mu_1 \in\mathbb{R}$ and $\mu_2 \in \mathbb{R}$ are unknown mean treatment effects and $\sigma^2 ~(\sigma >0)$ is the common unknown treatment effect variance. We define the treatment associated with $\max\{\mu_{1},\mu_{2}\}$ as the better or the promising treatment. We consider an adaptive design which consists of two stages, with a single data-driven selection made in the interim. In the first stage of the design, say stage $1$, the treatment $\tau_1$ is administered to $n_1$ respondents and the treatment $\tau_2$ is independently administered to another set of $n_1$ respondents. Let $\overline{X}_i,~ i=1,2,$ be  the sample averages (mean treatment effect estimates) corresponding to the two treatments. For the purpose of selecting the better treatment, we consider the natural selection rule that selects the treatment with the larger sample mean as the better treatment (for optimality properties of this natural selection rule, see \cite{bahadur1952impartial}, \cite{eaton1967some} and \cite{misra1994non}). Let $Q \in \{1,2\}$ be the index of the selected treatment $\tau_Q$ (i.e. $Q=1$, if $\overline{X}_1 > \overline{X}_2$,  $Q=2$, if $\overline{X}_2 \geq \overline{X}_1$). Treatment $\tau_Q$ is then carried forward to the second stage, referred to as the stage $2$ of the two stage design, for further analysis. In stage 2, the selected treatment $\tau_Q$ is independently administered to $n_2$ additional respondents. Let $\overline{Y}$ be the stage $2$ sample mean for the selected treatment $\tau_Q$. The goal is to estimate $\mu_{Q}$, the mean effect of the selected treatment. Note that $\mu_{Q} \equiv \mu_{Q} \left(\mu_{1},\mu_{2},\overline{X}_1,\overline{X}_2\right)$ is a random parameter which depends on $\mu_{1}$, $\mu_{2}$, $\overline{X}_1$ and $\overline{X}_2$, equals $\mu_{1}$, if $\overline{X}_1  > \overline{X}_2$, and equals $\mu_{2}$, if $\overline{X}_2  \geq \overline{X}_1$. Clearly, $\overline{X}_i \sim N\left(\mu_i, \frac{\sigma^2}{n_1} \right),~i=1,2,$ are independently distributed  and, conditioned on $Q$, $\overline{Y} \sim N\left(\mu_Q, \frac{\sigma^2}{n_2} \right)$. \par 

\par
\noindent The following notations will also be utilized throughout the paper: \\
\noindent $\utilde{T} = (\overline{X}_1,\overline{X}_2, \overline{Y},S^2)$; $S^2=\sum\limits_{i=1}^{2} \sum\limits_{j=1}^{n_1} X_{ij}^2+\sum\limits_{j=1}^{n_2} Y_{j}^2$; $\underline{\theta} = (\mu_1,\mu_2,\sigma)$; $\Theta = \mathbb{R}^2 \times (0,\infty)$;  $\overline{X}_{Q} = \max (\overline{X}_1 ,\overline{X}_2)$ (maximum of $\overline{X}_1$ and $\overline{X}_2$); $\overline{X}_{3-Q} = \min (\overline{X}_1 ,\overline{X}_2)$ (minimum of $\overline{X}_1$ and $\overline{X}_2$).  Also, for any $\underline{\theta} \in \Theta$, $\mathbb{P}_{\underline{\theta}}(\cdot)$ will denote the probability  measure induced by $\utilde{T} = (\overline{X}_1,\overline{X}_2, \overline{Y},S^2)$, when $\underline{\theta} \in \Theta$ is the true parameter value, and $\mathbb{E}_{\underline{\theta}}(\cdot)$ will denote the expectation operator under the probability measure $\mathbb{P}_{\theta}(\cdot)$, $\underline{\theta} \in \Theta$. \par
\noindent In this paper, our focus is on point estimation of the selected treatment mean effect defined by
\begin{align}
	\label{S1.E1}
	\mu_Q\equiv \mu_{Q}\left(\mu_{1},\mu_{2}, \overline{X}_1,\overline{X}_2\right)&=\begin{cases}
		\mu_1, & \text{if $\overline{X}_1  \geq \overline{X}_2$ }\\
		\mu_2, & \text{if $\overline{X}_1 < \overline{X}_2$}
	\end{cases},\nonumber\\
	&=\mu_{1}I\left(\overline{X}_1 \geq \overline{X}_2\right) +\mu_{2} I\left(\overline{X}_2 >\overline{X}_1\right),
\end{align}
under the scaled squared error loss function
\begin{equation}	\label{S2.E2}
	L_{\utilde{T}}(\underline{\theta},a) = \left(\frac{a-\mu_Q}{\sigma} \right)^2  ,~~ \underline{\theta} \in \Theta, ~ a \in \mathcal{A} = \mathbb{R}.
\end{equation}
It is worth mentioning here that the statistic $\left(\frac{n_1\overline{X}_Q+n_2\overline{Y}}{n_1+n_2},\overline{X}_{3-Q},S^2,Q\right)$ is minimal sufficient but not complete. However, given $Q$, the statistic $\left(\frac{n_1\overline{X}_Q+n_2\overline{Y}}{n_1+n_2},\overline{X}_{3-Q},S^2\right)$ is a complete-sufficient statistic. Consequently, $\utilde{T} = (\overline{X}_1,\overline{X}_2, \overline{Y},S^2)$ is a sufficient statistic and $\mu_{Q}$ depends on observations only through $\utilde{T}$. Therefore, we may restrict our attention to only those estimators that depend on observations only through $\utilde{T}$ (see \cite{misra1993umvue}) . \par 
Under the scaled squared error loss function $(2.2)$, the risk function (also referred to as the scaled mean squared error) of an estimator $d(\utilde{T})$ is defined by
$$ R(\underline{\theta}, d)=\mathbb{E}_{\underline{\theta}}\left(\frac{d(\utilde{T})-\mu_Q}{\sigma}\right)^2,~ \underline{\theta} \in \Theta.$$ 
Suppose, we have a prior distribution (density) $\Pi$ on $\Theta$. Then, the Bayes risk of an estimator $d$,  with respect to the prior, $\Pi$ is defined as 
\begin{align*}
	r(\Pi, d)&=\mathbb{E}_{\Pi} \left(  R(\underline{\theta}, d) \right)\\
	&=\displaystyle \int_{\Theta} \displaystyle \int_{\zeta} L_{\utilde{T}}(\underline{\theta},d(\utilde{t})) f(\utilde{t}|\underline{\theta}) \Pi(\underline{\theta})d \underline{t}~ d \underline{\theta},
\end{align*}	
where $\zeta$ denotes the support of $\utilde{T}$. \par 
An estimator $d_{\Pi}$ that minimizes the Bayes risk $r(\Pi, d)$, among all estimators $d$ of $\mu_{Q}$, is called a Bayes estimator with respect to the prior $\Pi$. \par 
\noindent An estimator $d(\utilde{T})$ is said to be conditionally unbiased, if
$$ \mathbb{E}_{\underline{\theta}}((d(\utilde{T})|Q=q))=\mu_{q},~ \forall ~\underline{\theta} \in \Theta.$$ 
\noindent A naive estimator for estimating the mean effect of the selected treatment $\mu_Q$ is the weighted average of the sample means at the two stages, i.e. 
\begin{equation}
	d_M(\utilde{T})=\frac{n_1 \overline{X}_Q+n_2\overline{Y}}{n_1+n_2}.
\end{equation}
Clearly, $d_M(\utilde{T})$ is the maximum likelihood estimator (MLE) of $\mu_{Q}$. \par 
In the following subsection, the UMVCUE of the selected treatment mean $\mu_Q$ is derived under the assumption that the common variance is unknown.

 \subsection{The extended UMVCUE}
 \begin{theorem}
 	The two-stage UMVCUE of $\mu_{Q}$, given $Q$, is 
 	\begin{align}
 		d_U(\utilde{T})&=\frac{Z}{n_1+n_2}-\sqrt{\frac{n_1}{n_2(n_1+n_2)}}\widetilde{S} \frac{(1-{V^*}^2)^c}{2^{2c}c B(c,c)I_{c,c}\left(\frac{V^*+1}{2}\right)},
 	\end{align}
 	
 where, $Z=n_1\overline{X}_Q+n_2\overline{Y}$, $V=\frac{\sqrt{\frac{n_1(n_1+n_2)}{n_2}}}{\widetilde{S}} \left(\frac{Z}{n_1+n_2}-\overline{X}_{3-Q}\right)$, $V^*=\min\{V,1\}$, \vspace*{3mm}\\
 $\widetilde{S}^2=\sum\limits_{i=1}^{2} \sum\limits_{j=1}^{n_1} X_{ij}^2+\sum\limits_{j=1}^{n_2} Y_{j}^2-(n_1+n_2)\left(\frac{Z}{n_1+n_2}\right)^2-n_1 \overline{X}^2_{3-Q}$ , $c=\frac{2n_1+n_2-3}{2}$,\vspace*{3mm}\\
 $I_{c,c}(u)$ is the cumulative distribution function of a $Beta(c,c)$ distribution and $B(c,c)=\frac{\Gamma(c) \Gamma(c)}{\Gamma(2c)},~c >0$, is the usual beta function.
 \end{theorem}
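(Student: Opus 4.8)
The plan is to obtain $d_U$ as the Rao--Blackwellization of the unbiased stage-2 estimator $\overline{Y}$ with respect to the complete-sufficient statistic available conditionally on $Q$. Since $\overline{Y}$ satisfies $\mathbb{E}_{\underline{\theta}}(\overline{Y}\mid Q=q)=\mu_q$, the conditional expectation
\[
d_U(\utilde{T}) = \mathbb{E}_{\underline{\theta}}\!\left(\overline{Y}\,\Big|\, \tfrac{n_1\overline{X}_Q+n_2\overline{Y}}{n_1+n_2},\,\overline{X}_{3-Q},\,S^2,\,Q\right)
\]
is, by the Lehmann--Scheff\'e theorem, the unique UMVCUE (it depends only on $\utilde T$ through the complete-sufficient statistic, is conditionally unbiased, and has minimum variance among all conditionally unbiased estimators). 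So the whole content of the theorem is the explicit evaluation of this conditional expectation, and that is where the real work lies.

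First I would fix $Q=q$ and pass to the sufficient reduction $(\overline{X}_Q,\overline{X}_{3-Q},\overline{Y},S^2)$, noting that on $\{Q=q\}$ the three sample means are independent normals (with variances $\sigma^2/n_1,\sigma^2/n_1,\sigma^2/n_2$) subject to the selection constraint $\overline{X}_Q>\overline{X}_{3-Q}$ (or $\geq$), while the residual sum of squares is an independent $\sigma^2\chi^2$. Next I would change variables from $(\overline{X}_Q,\overline{Y})$ to $\big(W:=\tfrac{n_1\overline{X}_Q+n_2\overline{Y}}{n_1+n_2},\,\overline{Y}\big)$, so that $W$ is part of the conditioning statistic and $\overline{Y}$ is the quantity to be averaged; the quadratic identity
\[
n_1\overline{X}_Q^2+n_2\overline{Y}^2=(n_1+n_2)W^2+\tfrac{n_1n_2}{n_1+n_2}\big(\overline{Y}-\overline{X}_Q\big)^2
\]
lets me absorb $W$ into $\widetilde S^2$ and show that, conditionally on $(W,\overline{X}_{3-Q},S^2,Q=q)$, the only remaining randomness is a one-dimensional variable, essentially $t:=\sqrt{n_1n_2/(n_1+n_2)}\,(\overline{Y}-W)/\widetilde S$, whose conditional density is proportional to $(1-t^2)^{c-1}$ on the range cut out by the selection event $\overline{X}_Q>\overline{X}_{3-Q}$ — i.e.\ a symmetric Beta-type (rescaled) density on $(-1,1)$ truncated to $t<V$, where $V$ is exactly the quantity defined in the statement and $c=(2n_1+n_2-3)/2$. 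The Gaussian factors involving $\sigma$ and the chi-square factor combine so that $\sigma$ drops out entirely, which is the mechanism by which a usable estimator exists despite $\sigma$ being unknown; getting this cancellation and the correct exponent $c-1$ from the $\chi^2$ degrees of freedom $(2n_1-1)+(n_2-1)-1$ is the step I would be most careful about.

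Finally, with $\overline{Y}=W+\sqrt{(n_1+n_2)/(n_1n_2)}\,\widetilde S\,t$, I would compute $\mathbb{E}(t\mid \cdot)$ against the truncated density $\propto(1-t^2)^{c-1}\mathbf 1_{(-1,V^*)}(t)$. The normalizing constant is $2^{2c-1}B(c,c)\,I_{c,c}\!\big(\tfrac{V^*+1}{2}\big)$ after the substitution $u=(t+1)/2$, and the first moment integrates in closed form because $\frac{d}{dt}(1-t^2)^c=-2c\,t\,(1-t^2)^{c-1}$, giving $\int_{-1}^{V^*}t(1-t^2)^{c-1}dt=\frac{1-(1-{V^*}^2)^c}{2c}$ minus the analogous term at $-1$, which vanishes; dividing yields the bias-correction term
\[
\mathbb{E}(t\mid\cdot)=-\,\frac{(1-{V^*}^2)^c}{2^{2c}\,c\,B(c,c)\,I_{c,c}\!\big(\tfrac{V^*+1}{2}\big)}.
\]
Substituting back and replacing $W$ by $Z/(n_1+n_2)$ reproduces $d_U(\utilde T)$ exactly. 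The use of $V^*=\min\{V,1\}$ handles the case where the selection constraint is vacuous (the whole interval $(-1,1)$ is allowed), for which the correction term is simply $0$. The main obstacle, as indicated, is the careful bookkeeping in the joint change of variables to verify that conditioning collapses everything to this one-dimensional truncated Beta density with the stated parameters and that all $\sigma$-dependence cancels; once that is in place the moment computation is elementary.
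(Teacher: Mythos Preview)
Your approach is exactly the paper's: Rao--Blackwellize the conditionally unbiased $\overline Y$ against the complete-sufficient statistic $(Z,\overline X_{3-Q},\widetilde S^2)$ given $Q$, reduce to the conditional law of a single pivotal variable with truncated symmetric Beta density $(1-u^2)^{c-1}$, and integrate. The moment computation and the $2^{2c}cB(c,c)I_{c,c}$ normalization are carried out just as you indicate.

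There is, however, a bookkeeping slip in your normalization of the pivotal variable. You set $t=\sqrt{n_1n_2/(n_1+n_2)}\,(\overline Y-W)/\widetilde S$, but this is the scaling appropriate to $\overline X_Q-\overline Y$, not to $\overline Y-W$. Since $\overline Y-W=\dfrac{n_1}{n_1+n_2}(\overline Y-\overline X_Q)$, the quadratic identity you quote actually gives
\[
\widetilde S^{2}=\sum_{i,j}(X_{ij}-\overline X_i)^2+\sum_j(Y_j-\overline Y)^2+\frac{n_2(n_1+n_2)}{n_1}\,(\overline Y-W)^2,
\]
so the variable that genuinely ranges over $(-1,1)$ with conditional density proportional to $(1-u^2)^{c-1}$ is
\[
U=\sqrt{\frac{n_2(n_1+n_2)}{n_1}}\,\frac{\overline Y-W}{\widetilde S},
\]
which is the paper's choice; your $t$ equals $\dfrac{n_1}{n_1+n_2}U$ and is confined to a strictly smaller interval. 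With your $t$, the density is not $(1-t^2)^{c-1}$ and, after substituting back via $\overline Y=W+\sqrt{(n_1+n_2)/(n_1n_2)}\,\widetilde S\,t$, the coefficient in front of $\widetilde S$ would come out as $\sqrt{(n_1+n_2)/(n_1n_2)}$ rather than the correct $\sqrt{n_1/(n_2(n_1+n_2))}$ stated in the theorem. Replacing $t$ by $U$ throughout fixes everything and the rest of your argument goes through verbatim.
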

\begin{proof} Let $\underline{S}=\left(\underline{X_1}, \underline{X_2},\underline{Y}\right)$ denote the vector of $n=2n_1+n_2$ sample observations of stage 1 and stage 2, combined. Then, the joint probability density function (p.d.f.) of $\underline{S}$, given $Q=q$, based on the first and second stage data can be written as
\begin{align*}
f_q(\underline{x_1},\underline{x_2},\underline{y}|\underline{\theta}) \propto \exp \left(\frac{1}{\sigma^2}\left[\sum\limits_{j=1}^{n_1}x_{qj}+\sum\limits_{j=1}^{n_2}y_{j}\right]\mu_q+ \frac{\sum\limits_{j=1}^{n_1}x_{(3-q) j}}{\sigma^2} \mu_ {3-q}-\frac{1}{2\sigma^2}\left[\sum\limits_{j=1}^{n_1}x^2_{qj}+\sum\limits_{j=1}^{n_1}x^2_{(3-q)j}+\sum\limits_{j=1}^{n_2}y^2_{j}\right]\right).
\end{align*}	
From the expression of the joint density $f_q(\underline{x_1},\underline{x_2},\underline{y}|\underline{\theta})$, we observe that, given $Q$, the statistic $\left(Z,\overline{X}_{3-Q}, \widetilde{S}^2\right)$ is a complete-sufficient statistic. Since, given $Q$, $\overline{Y}$ is an unbiased estimator of $\mu_{Q}$, the UMVCUE of $\mu_{Q}$ is the Rao-Blackwellization of $\overline{Y}$, conditional on the complete-sufficient statistic $\left(Z,\overline{X}_{3-Q}, \widetilde{S}^2\right)$. Therefore, the required UMVCUE of the selected mean $\mu_Q$, is $\mathbb{E}_{\underline{\theta}}\left(\overline{Y} \bigg|\left(Z,\overline{X}_{3-Q}, \widetilde{S}^2,Q\right)\right)$.\par
\noindent Define,
$$U=\frac{\sqrt{\frac{n_2(n_1+n_2)}{n_1}}\left(\overline{Y}-\frac{Z}{n_1+n_2}\right)}{\widetilde{S}}.$$
In order to show that the UMVCUE is the same as (2.4), it requires to show that
\begin{align}
\mathbb{E}_{\underline{\theta}}\left(U \bigg|\left(Z,\overline{X}_{3-Q}, \widetilde{S}^2\right)\right)&=-\frac{(1-{V^*}^2)^c}{2^{2c}c B(c,c)I_{c,c}\left(\frac{V^*+1}{2}\right)}.
\end{align}
To establish $(2.5)$, we need to obtain the conditional p.d.f. of $U$ given $\left(Z, \overline{X}_{3-Q}, \widetilde{S}^2, Q\right)$. \par \noindent Note that, on rearrangement of the terms in the expression of $\widetilde{S}^2$, we can write the expression of $U^2$ as
\begin{align}
U^2 &= \frac{\frac{n_2(n_1+n_2)}{n_1}\left(\overline{Y}-\frac{Z}{n_1+n_2}\right)^2}{\sum\limits_{i=1}^{2} \sum\limits_{j=1}^{n_1} (X_{ij}-\overline{X}_{i})^2+\sum\limits_{j=1}^{n_2} (Y_{j}-\overline{Y})^2+\frac{n_2(n_1+n_2)}{n_1}\left(\overline{Y}-\frac{Z}{n_1+n_2}\right)^2}.
\end{align}
As in \cite{cohen1989two} and \cite{robertson2019conditionally}, it can be verified that the conditional density of $U$, given $\left(Z, \overline{X}_{3-Q}, \widetilde{S}^2, Q\right)=\left(z,x_{3-q},\widetilde{s},q\right)$, is given by
\begin{align*}
	f_{U|\left(Z, \overline{X}_{3-Q}, \widetilde{S}^2, Q\right)}(u|(z,x_{3-q},\widetilde{s},q))&=\frac{(1-u^2)^{c-1}}{\displaystyle \int_{-1}^{v^*} (1-u^2)^{c-1} du},~~-1<u<v^*,
\end{align*}
where, for $v=\frac{\sqrt{\frac{n_1(n_1+n_2)}{n_2}}}{\widetilde{s}} \left(\frac{z}{n_1+n_2}-x_{3-Q}\right)$, $v^*=\min\{v,1\}$. \par 
Therefore, we have
\begin{align*}
	\mathbb{E}_{\underline{\theta}}\left(U \bigg|\left(Z,\overline{X}_{3-Q}, \widetilde{S}^2,Q\right)\right)&=\frac{\displaystyle \int_{-1}^{V^*} u (1-u^2)^{c-1}~du}{\displaystyle \int_{-1}^{V^*} (1-u^2)^{c-1}du}\\
	&=-\frac{(1-{V^*}^2)^c}{2^{2c}c B(c,c)I_{c,c}\left(\frac{V^*+1}{2}\right)}.
\end{align*}
Hence the result follows.
\end{proof}
Now, we will show that the naive estimator $d_M(\utilde{T})$, defined by $(2.3)$, is minimax and admissible for estimating $\mu_{Q}$ under the scaled mean squared error criterion.
\section{The minimax and admissible estimator}
Let ${S^*}^2=\frac{\sum\limits_{i=1}^{2} \sum\limits_{j=1}^{n_1} (X_{ij}-\overline{X}_{i})^2+\sum\limits_{j=1}^{n_2} (Y_{j}-\overline{Y})^2}{2n_1+n_2-3}$ be the pooled sample variance of the stage 1 and the stage 2 data, and let $W=(2n_1+n_2-3){S^*}^2$, so that $\frac{W}{\sigma^2} \sim \chi^2_{2n_1+n_2-3}$. Suppose that the unknown parameter vector $\underline{\theta}=(\mu_1,\mu_2,\sigma^2) \in \Theta$ is a realization of a random vector $\underline{P}=(P_1,P_2,P_3)$, having a specified probability distribution function. Consider a sequence of prior distributions (densities) $\left\{\xi_m\right\}_{m\geq 1}$, for  $\underline{P}=(P_1,P_2,P_3)$, such that :
\begin{itemize}
	\item[(i)] for any fixed $p_3 >0$, given $P_3 = p_3$, conditionally, $P_1$ and $P_2$ are independent and identically distributed as $N(0,m p_3)$;
	\item[(ii)] the random variable $P_3$ follows the inverse exponential distribution having the p.d.f.
\begin{equation*}
	\Pi_{1,P_3}(p_3)=\frac{1}{p_3^2} e^{-\frac{1}{p_3}}, ~p_3 >0. 
\end{equation*}
\end{itemize}
Recall that,  $\overline{X}_i ~(i=1,2)$ is the sample mean of the first stage sample from the $i^{th}$ population and $\overline{Y}$ is the second stage sample mean of the sample drawn from the population selected at the first stage. Then, under the prior distribution $\xi_m$, the joint posterior distribution of $ (P_1, P_2,P_3)$ given $(\overline{X}_1, \overline{X}_2,\overline{Y}, W)=(x_1,x_2,y,w)$, is such that:
\begin{itemize}
	\item[(i)] for any $p_3 >0$, conditionally, the random variables $ P_1$ and $P_2$ are independently distributed as $N\left(\theta_{1,m}, \tau^2_{1,m}\right)$ and $N\left(\theta_{2,m}, \tau^2_{2,m}\right)$, respectively, where 
	\begin{equation*}
		\left(\theta_{1,m}, \theta_{2,m}, \tau^2_{1,m}, \tau^2_{2,m}\right)=\begin{cases}
			\left(\frac{n_1x_1+n_2y}{n_1+n_2+\frac{1}{m}},\frac{n_1x_2}{n_1+\frac{1}{m}},\frac{1}{\frac{n_1+n_2}{p_3}+\frac{1}{m p_3}},\frac{1}{\frac{n_1}{p_3}+\frac{1}{m p_3}}\right), & \text{if $x_1  \geq x_2$ } \vspace{2mm}\\ 
			\left(\frac{n_1x_1}{n_1+\frac{1}{m}},\frac{n_1x_2+n_2y}{n_1+n_2+\frac{1}{m}}, \frac{1}{\frac{n_1}{p_3}+\frac{1}{m p_3}},\frac{1}{\frac{n_1+n_2}{p_3}+\frac{1}{m p_3}}\right), & \text{if $x_1 < x_2$}
		\end{cases}.
	\end{equation*}
\item [(ii)] the random variable $P_3$ follows the inverse gamma distribution having the p.d.f.
 $$\Pi_{2,m}(p_3)=\frac{(v_m)^{\frac{n+7}{2}}}{\Gamma(n+7)} \frac{e^{-\frac{v_m}{p_3}}}{p_3^{\frac{n+9}{2}}} $$
 where, $n=2 n_1+n_2-3$ and
 \begin{equation*}
 	v_m=\begin{cases}
 		1+\frac{w}{2}+\frac{n_1 x_1^2}{2}+\frac{n_1 x_2^2}{2}+\frac{n_2 y^2}{2}-\frac{(n_1 x_1+n_2y)^2}{2(n_1+n_2+\frac{1}{m})}-\frac{(n_1 x_2)^2}{2(n_1+\frac{1}{m})}, & \text{if $x_1  \geq x_2$ } \vspace{2mm}\\ 
 		1+\frac{w}{2}+\frac{n_1 x_1^2}{2}+\frac{n_1 x_2^2}{2}+\frac{n_2 y^2}{2}-\frac{(n_1 x_2+n_2y)^2}{2(n_1+n_2+\frac{1}{m})}-\frac{(n_1 x_1)^2}{2(n_1+\frac{1}{m})}, & \text{if $x_1 < x_2$}
 	\end{cases},
 \end{equation*}
$m=1,2,\ldots.$
\end{itemize}

Therefore, under the scaled squared error loss function \eqref{S2.E2}, the Bayes estimator of the selected treatment mean $\mu_Q$, w.r.t. the prior distribution $\xi_m$, is given by
\begin{align}
	d_{\xi_m}\left(\utilde{T}\right)
	&=\begin{cases*}
		\frac{\mathbb{E}^{\underline{P}|\utilde{T}}\left(\frac{P_1}{P_3}\right)}{\mathbb{E}^{\underline{P}|\utilde{T}}\left(\frac{1}{P_3}\right)} , & \text{if $\overline{X}_1 \geq \overline{X}_2$ }\\
		\frac{\mathbb{E}^{\underline{P}|\utilde{T}}\left(\frac{P_2}{P_3}\right)}{\mathbb{E}^{\underline{P}|\utilde{T}}\left(\frac{1}{P_3}\right)}, & \text{if $\overline{X}_1 <  \overline{X}_2$}
	\end{cases*} \nonumber\\
	&=\begin{cases*}
		\frac{n_1 \overline{X}_1 + n_2\overline{Y}}{n_1+n_2+\frac{1}{m}}, & \text{if $\overline{X}_1 \geq \overline{X}_2$ }\\
		\frac{n_1 \overline{X}_2 + n_2\overline{Y}}{n_1+n_2+\frac{1}{m}}, & \text{if $\overline{X}_1 <  \overline{X}_2$}
	\end{cases*} \nonumber\\
	&=\frac{n_1\overline{X}_Q+n_2\overline{Y}}{n_1+n_2+\frac{1}{m}},~m=1,2, \ldots.
\end{align}
The posterior risk of the Bayes rule $d_{\xi_m}\left(\utilde{T}\right)$ is obtained as
\begin{align*}
	r_{d_{\xi_m}}\left(\utilde{T}\right)&=\mathbb{E}^{\underline{P}|\utilde{T}} \left[\frac{\left(\frac{n_1\overline{X}_Q+n_2\overline{Y}}{n_1+n_2+\frac{1}{m}}-\mu_Q\right)^2}{P_3}\right]\\
	&=\frac{1}{n_1+n_2+\frac{1}{m}},
\end{align*}
which is independent of $\utilde{T}=(\overline{X}_1,\overline{X}_2,\overline{Y},W)$. Hence, the Bayes risk of the estimator $d_{\xi_m}$ is 
\begin{equation}
	r^*_{d_{\xi_m}}\left(\xi_m\right)=\frac{1}{n_1+n_2+\frac{1}{m}},~ m=1,2,\ldots
\end{equation}
Applying Lemma $2.2 ~(ii)$ of \cite{misra2022estimation}, we obtain the risk of the estimator $d_M(\utilde{T})=\frac{n_1\overline{X}_Q+n_2\overline{Y}}{n_1+n_2}$ as
\begin{align}
	R(\underline{\theta},d_{M})&=\mathbb{E}_{\underline{\theta}}\left( \frac{\frac{n_1\overline{X}_Q+n_2\overline{Y}}{n_1+n_2}-\mu_Q}{\sigma} \right)^2 \nonumber \\
	&=\frac{1}{n_1+n_2}.
\end{align}
Therefore, the Bayes risk of the naive estimator $d_{M}(\utilde{T})$, under the prior $\xi_m$, is
\begin{align}
	r^*_{d_{M}}\left(\xi_m\right)&=\frac{1}{n_1+n_2},~ m=1,2,\ldots
\end{align}
Now, we provide the following theorem which proves the minimaxity of the natural estimator $d_{M}$.
\begin{theorem}
	Under the scaled squared error loss function \eqref{S2.E2}, the natural estimator  $d_M(\utilde{T})=\frac{n_1\overline{X}_Q+n_2\overline{Y}}{n_1+n_2}$ is minimax for estimating the selected treatment mean $\mu_{Q}$.
\end{theorem}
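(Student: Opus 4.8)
The plan is to invoke the classical ``a constant-risk estimator that arises as a limit of Bayes estimators is minimax'' argument, using the two ingredients already assembled above: first, that the naive estimator $d_M$ has \emph{constant} risk $R(\underline{\theta},d_M)=\frac{1}{n_1+n_2}$ for every $\underline{\theta}\in\Theta$ (equation (3.6)); and second, that the sequence of \emph{proper} priors $\{\xi_m\}_{m\ge 1}$ produces Bayes estimators $d_{\xi_m}$ whose Bayes risks $r^*_{d_{\xi_m}}(\xi_m)=\frac{1}{n_1+n_2+\frac{1}{m}}$ increase to $\frac{1}{n_1+n_2}$ as $m\to\infty$ (equation (3.3)). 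Since $\sup_{\underline{\theta}\in\Theta}R(\underline{\theta},d_M)=\frac{1}{n_1+n_2}$, it suffices to show that no estimator can do strictly better than this value in the worst case.

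I would argue by contradiction. Suppose $d_M$ is not minimax. Then there is an estimator $d'$ of $\mu_Q$ with $\sup_{\underline{\theta}\in\Theta}R(\underline{\theta},d')<\sup_{\underline{\theta}\in\Theta}R(\underline{\theta},d_M)=\frac{1}{n_1+n_2}$, so we may fix $\varepsilon>0$ with $R(\underline{\theta},d')\le \frac{1}{n_1+n_2}-\varepsilon$ for all $\underline{\theta}\in\Theta$. Integrating this bound with respect to each prior $\xi_m$ gives $r^*_{d'}(\xi_m)=\mathbb{E}_{\xi_m}\!\left(R(\underline{\theta},d')\right)\le \frac{1}{n_1+n_2}-\varepsilon$ for every $m$. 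On the other hand, because $\xi_m$ is a proper prior and $d_{\xi_m}$ is the corresponding Bayes estimator, $d_{\xi_m}$ minimizes the Bayes risk, whence $\frac{1}{n_1+n_2+\frac{1}{m}}=r^*_{d_{\xi_m}}(\xi_m)\le r^*_{d'}(\xi_m)\le \frac{1}{n_1+n_2}-\varepsilon$ for all $m=1,2,\ldots$. Letting $m\to\infty$ yields $\frac{1}{n_1+n_2}\le \frac{1}{n_1+n_2}-\varepsilon$, a contradiction. Hence $d_M$ is minimax under the scaled squared error loss function \eqref{S2.E2}.

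I do not anticipate a genuine obstacle here, since the substantive work—computing the constant risk of $d_M$ via Lemma 2.2(ii) of \cite{misra2022estimation} and identifying the Bayes rules $d_{\xi_m}$ together with their Bayes risks—has already been carried out in (3.3)--(3.6). The only points requiring a word of care are that each $\xi_m$ is a bona fide proper prior (the $N(0,mp_3)$ conditionals for $P_1,P_2$ and the inverse-exponential marginal for $P_3$ are all proper probability distributions), so that the inequality $r^*_{d_{\xi_m}}(\xi_m)\le r^*_{d'}(\xi_m)$ is legitimate, and that $R(\underline{\theta},d_M)$ is finite and constant, which is precisely (3.6). I would also add a short remark that this argument establishes minimaxity only; admissibility of $d_M$ will be treated separately.
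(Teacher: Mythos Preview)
Your proposal is correct and follows essentially the same approach as the paper: both invoke the standard ``limit of Bayes risks'' criterion, using the sequence of proper priors $\{\xi_m\}$ whose Bayes risks $r^*_{d_{\xi_m}}(\xi_m)=\frac{1}{n_1+n_2+1/m}$ converge upward to the constant risk $\frac{1}{n_1+n_2}$ of $d_M$. The only cosmetic difference is that the paper writes the argument as a direct chain of inequalities (for an arbitrary competitor $d$, $\sup_{\underline{\theta}}R(\underline{\theta},d)\ge r^*_{d_{\xi_m}}(\xi_m)\to \frac{1}{n_1+n_2}$), whereas you phrase the same inequalities as a proof by contradiction; the substance is identical.
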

\begin{proof}
	Let $d$ be any other estimator. Since, $d_{\xi_m}$, given by $(3.1)$, is the Bayes estimator of $\mu_{Q}$ under the prior $\xi_m, m=1,2,\ldots$, we have,
	\begin{align*}
		\sup_{\underline{\theta} \in \Theta } R(\underline{\theta}, d) &\geq \int_{\Theta} R(\underline{\theta},d) \xi_{m}(\underline{\theta})d\underline{\theta}\\
		&\geq \int_{\Theta} R(\underline{\theta},d_{\xi_{m}}) \xi_{m}(\underline{\theta})d\underline{\theta}\\
		&=r^*_{d_{\xi_{m}}}(\xi_{m})=\frac{1}{n_1+n_2+\frac{1}{m}}, ~~~m=1,2,\ldots ~~~(\text{using}~ (3.2))\\
		\Rightarrow \sup_{\underline{\mu} \in \Theta } R(\underline{\mu}, d) &\geq \lim_{m\to\infty}r^*_{d_{\xi_{m}}}(\xi_{m})=\frac{1}{n_1+n_2}=\sup_{\underline{\theta} \in \Theta } R(\underline{\theta}, d_{M}), ~~~(\text{using} ~(3.4))
	\end{align*}
	implying that $d_{M}(\utilde{T})=\frac{n_1\overline{X}_Q+n_2\overline{Y}}{n_1+n_2}$ is minimax for estimating $\mu_{Q}$.	
\end{proof}

We will now invoke the principle of invariance. The problem of estimating the selected treatment mean $\mu_{Q}$, under the scaled squared error loss function \eqref{S2.E2},  is invariant under the affine group of transformations and also under the group of permutations. 

It is easy to verify that any affine and permutation equivariant estimator of $\mu_{Q}$ will be of the form
\begin{equation} \label{S2,E3}
	d_{\psi}(\utilde{T})=\frac{n_1\overline{X}_Q+n_2\overline{Y}}{n_1+n_2} - \widetilde{S}\psi\left(\frac{D}{\widetilde{S}}\right),
\end{equation}
for some function $\psi:\mathbb{R} \rightarrow \mathbb{R}$, where $D=\frac{n_1\overline{X}_Q+n_2\overline{Y}}{n_1+n_2}-\overline{X}_{3-Q}$.  
Let the class of all affine and permutation equivariant estimators of the type (3.5) be denoted by $\mathscr{E}_1$. Clearly, the MLE $d_M(\utilde{T})$  and the UMVCUE $d_{U}$, defined by (2.3) and $(2.4)$  respectively, belong to the class $\mathscr{E}_1.$  

Next, we will show that the MLE $d_{M}$ is admissible within the class $\mathscr{E}_1$ of affine and permutation equivariant estimators. Notice that, the risk function of any estimator $d \in \mathscr{E}_1$ depends on $\underline{\theta} \in \Theta$ through $\mu=\frac{\theta_2-\theta_1}{\sigma}$, where $\theta_2=\max\left\{\mu_1,\mu_2\right\}$ and $\theta_1=\min\left\{\mu_1,\mu_2\right\}$. Consequently, the Bayes risk of any estimator $d \in \mathscr{E}_1$ depends on the prior distribution of $\underline{P}=(P_1, P_2, P_3)$ through the distribution of $\frac{P_{[2]} - P_{[1]} }{P_{3}}$, where $ P_{[1]}=\min \left\{P_1, P_2 \right\}$, $P_{[2]}=\max\left\{P_1, P_2\right\}.$ \par 
The following theorem establishes the admissibility of the estimator $d_{M}$, within the class $\mathscr{E}_1$ of affine and permutation equivariant estimators. Since the proof of the theorem is on the lines of the proof of Theorem 5.2 of \cite{masihuddin2021equivariant}, it is being omitted.
\begin{theorem}
	The natural estimator  $d_M(\utilde{T})=\frac{n_1\overline{X}_Q+n_2\overline{Y}}{n_1+n_2}$ is admissible for estimating the selected treatment mean $\mu_{Q}$, within the class $\mathscr{E}_1$, under the criterion of scaled mean squared error.
\end{theorem}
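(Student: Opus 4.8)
The plan is to prove admissibility of $d_M$ inside $\mathscr{E}_1$ by a Blyth-type limiting-Bayes argument, using exactly the sequence of proper priors $\{\xi_m\}_{m\ge1}$ already introduced and the Bayes risks computed in $(3.2)$ and $(3.4)$. Two structural facts drive everything. First, $d_M$ is the member of $\mathscr{E}_1$ with $\psi\equiv 0$ in $(3.5)$, and by $(3.3)$ it has the \emph{constant} scaled risk $R(\underline\theta,d_M)=1/(n_1+n_2)$. Second, as recorded just before the statement, the scaled risk of every $d\in\mathscr{E}_1$ is a function of the maximal invariant $\mu=(\theta_2-\theta_1)/\sigma\ge0$ alone; write $R(\underline\theta,d)=\widetilde R(\mu,d)$, and by the affine and permutation equivariance one may evaluate $\widetilde R(\cdot,d)$ at the canonical configuration $\sigma=1$, $\theta_1=0$, $\theta_2=\mu$.

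Suppose, towards a contradiction, that $d_M$ is inadmissible in $\mathscr{E}_1$: there is $d_0\in\mathscr{E}_1$ with $\widetilde R(\mu,d_0)\le 1/(n_1+n_2)$ for all $\mu\ge0$ and $\widetilde R(\mu_0,d_0)<1/(n_1+n_2)$ for some $\mu_0\ge 0$. Since $d_0$ dominates $d_M$ it has bounded (hence finite) risk, and a standard dominated-convergence / uniform-integrability argument in this Gaussian exponential-family model shows that $\mu\mapsto\widetilde R(\mu,d_0)$ is continuous; hence there are $\varepsilon>0$ and a nondegenerate subinterval $I\subset[0,\infty)$ containing $\mu_0$ on which $\widetilde R(\cdot,d_0)\le 1/(n_1+n_2)-\varepsilon$. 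Because $R(\cdot,d_M)$ is constant and $R(\cdot,d_0)$ depends on $\underline\theta$ only through $\mu$, integrating the nonnegative risk difference against $\xi_m$ and then using that $d_{\xi_m}$ of $(3.1)$ is the Bayes rule gives, for every $m$,
\begin{align*}
\varepsilon\,\xi_m(\mu\in I) &\le \int_{\Theta}\bigl(R(\underline\theta,d_M)-R(\underline\theta,d_0)\bigr)\,\xi_m(\underline\theta)\,d\underline\theta\\
&= r^{*}_{d_M}(\xi_m)-r^{*}_{d_0}(\xi_m) \;\le\; r^{*}_{d_M}(\xi_m)-r^{*}_{d_{\xi_m}}(\xi_m) \;=\; \delta_m ,
\end{align*}
where, by $(3.4)$ and $(3.2)$, $\delta_m:=\dfrac{1/m}{(n_1+n_2)(n_1+n_2+1/m)}$, so $\delta_m\sim\bigl((n_1+n_2)^2m\bigr)^{-1}$ as $m\to\infty$.

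It remains to compute $\xi_m(\mu\in I)$ and reach a contradiction. Under $\xi_m$, conditionally on $P_3=p_3$ we have $P_1-P_2\sim N(0,2mp_3)$, so $\mu=|P_1-P_2|/\sqrt{P_3}$, conditionally on $P_3=p_3$, is distributed as $|W|$ with $W\sim N(0,2m)$, independently of $p_3$; hence marginally $\mu$ has the half-normal density $g_m(t)=\frac{1}{\sqrt{\pi m}}\,e^{-t^2/(4m)}$, $t\ge0$. Thus $\sqrt m\,g_m(t)\to 1/\sqrt\pi$ uniformly on compact sets, so $\xi_m(\mu\in I)=\int_I g_m(t)\,dt\sim |I|/\sqrt{\pi m}$. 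Feeding this into the displayed sandwich forces $\varepsilon\le \delta_m/\xi_m(\mu\in I)\sim \sqrt{\pi}\bigl((n_1+n_2)^2|I|\sqrt m\bigr)^{-1}\to 0$, contradicting $\varepsilon>0$. Hence no such $d_0$ exists and $d_M$ is admissible within $\mathscr{E}_1$.

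The heart of the argument — and the reason this particular prior sequence works — is the rate matching in the last step: the mass $\xi_m$ puts on a fixed interval of $\mu$-values must tend to $0$ \emph{strictly more slowly} than the Bayes-risk excess $\delta_m$ of $d_M$ over $d_{\xi_m}$, and inflating the prior variance of $P_1,P_2$ by the factor $m$ (relative to $P_3$) is exactly what produces the $m^{-1/2}$ rate against the $m^{-1}$ rate of $\delta_m$. The only other nontrivial point is the continuity of the invariant risk $\mu\mapsto\widetilde R(\mu,d_0)$; since a dominating estimator automatically has bounded risk, this follows from a routine exponential-family uniform-integrability estimate after reduction to the canonical parametrization (precisely as in the proof of Theorem 5.2 of \cite{masihuddin2021equivariant}), while the remaining computations — the half-normal density and $\delta_m$ — are one-line verifications.
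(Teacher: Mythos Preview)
Your argument is correct and is precisely the Blyth limiting-Bayes proof that the paper's prior sequence $\{\xi_m\}$ was set up to deliver; the paper itself omits the proof and simply refers to Theorem~5.2 of \cite{masihuddin2021equivariant}, whose method is exactly the rate comparison $\delta_m\asymp m^{-1}$ versus $\xi_m(\mu\in I)\asymp m^{-1/2}$ that you carry out. The only point worth flagging is notational: in the paper $P_3$ plays the role of $\sigma^2$ (as the prior/posterior formulas confirm), so your identification $\mu=|P_1-P_2|/\sqrt{P_3}$ and the resulting half-normal computation are the right ones, even though the paper's text writes $(P_{[2]}-P_{[1]})/P_3$.
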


\section{Some additional naive estimators and a simulation study}
For estimating the selected treatment mean, some additional naive estimators can be obtained by plugging in the estimate of unknown variance $\sigma^2$ in the expressions of estimators ($\delta_{0}^{RB}$ and $\delta_1$) derived by \cite{misra2022estimation}, for the case when $\sigma^2$ is known. For unknown variance case, two such naive estimators are obtained as:

\begin{align}
d_{U1}(\utilde{T})=	d^{RB,\widehat{\sigma}}_{0}(\utilde{T})&=Z_1+ \frac{\sqrt{n_2}S^*}{\sqrt{n_1(n_1+n_2)}}\frac{\phi\left(\frac{\sqrt{n_1(n_1+n_2)}}{\sqrt{n_2}S^*}(Z_1-Z_2)\right)}{\Phi\left(\frac{\sqrt{n_1(n_1+n_2)}}{\sqrt{n_2}S^*}(Z_1-Z_2)\right)}
\end{align}
and 
\begin{align}
d_{U2}(\utilde{T})=	d^{\widehat{\sigma}}_{1}(\utilde{T})&=\begin{cases}
		\frac{(n_1+n_2)Z_1+n_1 Z_2}{2n_1+n_2}\left\{\frac{\Phi\left(\frac{Z_1-Z_2}{\widehat{\sigma_1}}\right)-\Phi\left(\frac{n_1(Z_1-Z_2)}{(2n_1+n_2)\widehat{\sigma_1}}\right)}{\Phi\left(\frac{Z_1-Z_2}{\widehat{\sigma_1}}\right)}\right\}\\+\frac{\widehat{\sigma_1}\phi\left(\frac{n_1(Z_1-Z_2)}{(2n_1+n_2)\widehat{\sigma_1}}\right)+Z_1\Phi\left(\frac{n_1(Z_1-Z_2)}{(2n_1+n_2)\widehat{\sigma_1}}\right)}{\Phi\left(\frac{Z_1-Z_2}{\widehat{\sigma_1}}\right)}, & \text{ if }~ Z_1 > Z_2\vspace{3mm}\\
		\frac{(n_1+n_2)Z_1+n_1 Z_2}{2n_1+n_2} , & \text{ if }~ Z_1 \leq  Z_2
	\end{cases} ,
\end{align}
where, $\widehat{\sigma_1}=S^*\sqrt{\frac{n_2}{n_1(n_1+n_2)}}$, $Z_1=\frac{n_1\overline{X}_Q+n_2\overline{Y}}{n_1+n_2}, Z_2=\overline{X}_{3-Q}$ and ${S^*}^2$ is the pooled sample variance based on the stage $1$ and the stage $2$ data. \par  Now we report a simulation study on performance comparison of estimators $d_{U_1}$, $d_{U_2}$, the MLE $d_{M}$ and the UMVCUE $d_{U}$ under the scaled mean squared error and the bias criterion. \par 
\vspace{4mm}

We compare the risk (scaled MSE) and the bias performances of various estimators of the selected treatment mean ($\mu_{Q}$) using the Monte-Carlo simulations. Following estimators are considered for our numerical study: $d_{M}$, $d_{U}$, $d_{U_1}$ and $d_{U_2}$ (see (2.3), (2.4), (4.1) and (4.2)). Since the scaled MSEs and biases of these estimators of $\mu_{Q}$ depend on parameters $\underline{\theta}=(\mu_{1},\mu_{2},\sigma) \in \Theta$  through $\mu~(\geq 0)$, we have plotted the simulated risks and the scaled biases of various estimators against $\mu \geq 0$ for different configurations of sample sizes $n_1$ and $n_2$. The simulated values of the scaled MSE and the bias based on 100,000 simulations are plotted in Figures 4.1-4.7. In Figures 4.1-4.2, we have plotted the simulated scaled MSEs of the proposed estimators against $\mu$. In Figures 4.3-4.5, we have plotted the simulated values of the scaled MSE against the information fraction~($=\frac{n_1}{n_1+n_2}$) for fixed total sample size $n=n_1+n_2$ and $\mu$. The scaled biases of various estimators have been plotted in Figures 4.6-4.7.\par
Following conclusions are drawn based on the simulation study :
\begin{itemize}
	\item[(i)] In conformity with (3.3), the scaled MSEs of the natural estimator $d_M$ is constant(=$\frac{1}{n_1+n_2}$) for all values of the normalized treatment effect difference $\mu$.
	
	\item[(ii)] The scaled MSEs of all other estimators ($d_U$, $d_{U1}$ and $d_{U2}$), except $d_M$, decrease as the the value of $\mu$ increases. For larger values of $\mu$, the estimator $d_{U2}$ has better scaled MSE performance as compared to other estimators.

	\item[(iii)] For $n_1=n_2$, the UMVCUE $d_U$ has the similar scaled MSE performance as the estimator $d_{U1}$.  For $n_1 \geq n_2$, the estimator $d_{U2}$ outperforms estimators $d_U$ and $d_{U1}$. \par 
	It is also observed that the estimator $d_{U2}$ uniformly dominates $d_{U1}$ in terms of the scaled MSE.\par 
	
	\item [(iv)] For small $n_2$ and relatively large $n_1$, the scaled MSEs of the estimators $d_{U1}$ and $d_{U2}$ are very much close to that of $d_M$.\par 
	When $n_1$ is small and $n_2$ is large, the scaled MSE of the UMVCUE $d_{U}$ is closer to the scaled MSEs of $d_M$.
	
	\item[(v)] For fixed smaller values of  $\mu$ and the total sample size $n=n_1+n_2$, the scaled MSEs of the UMVCUE $d_U$ increases as the information fraction $\frac{n_1}{n}$ increase whereas the scaled MSEs of $d_{U1}$ and $d_{U2}$ decreases as the information fraction increases. For larger values of $\mu$, the estimators $d_U$ and $d_M$ have similar scaled MSE performance.
	
     \item [(vi)] The scaled biases of all the competing estimators ($d_M$, $d_{U1}$ and $d_{U2}$), except the UMVCUE $d_U$, decreases as the value of $\mu$ increases. \par 
	 For larger values of $\mu$ these estimators have scaled bias performance comparable to $d_U$. \par It is also interesting to note that all the competing estimators of $\mu_{Q}$ have the maximum bias when $\mu$ is zero and it decreases as the value of $\mu$ increases.
	 
	 \item[(vii)] The scaled MSEs and scaled biases of all the competing estimators decrease towards zero for large value of the sample sizes $n_1$ and $n_2$.

\end{itemize}
\vspace{2mm}
 When scaled MSE is the key criterion for choosing suitable estimators, we recommend estimators $d_M$ and $d_{U2}$. For some specific configurations of $n_1$ and $n_2$ (small $n_1$ and large $n_2$), the UMVCUE $d_U$ is also a good competitor. \par 
 \noindent When both scaled bias and scaled MSE are to be controlled, we recommend using estimators $d_U$ and $d_M$.
\newpage
\FloatBarrier
\begin{figure}
	\centering
	\begin{subfigure}[b]{0.4\textwidth}
		\centering
		\includegraphics[width=9cm,height=8cm]{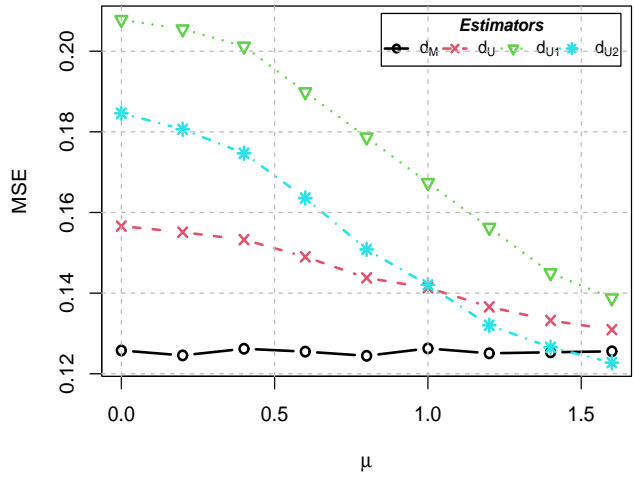}
		\caption{$n_1=3, n_2=5$}
	\end{subfigure}
	\hfill
	\begin{subfigure}[b]{0.4\textwidth}
		\centering
		\includegraphics[width=9cm,height=8cm]{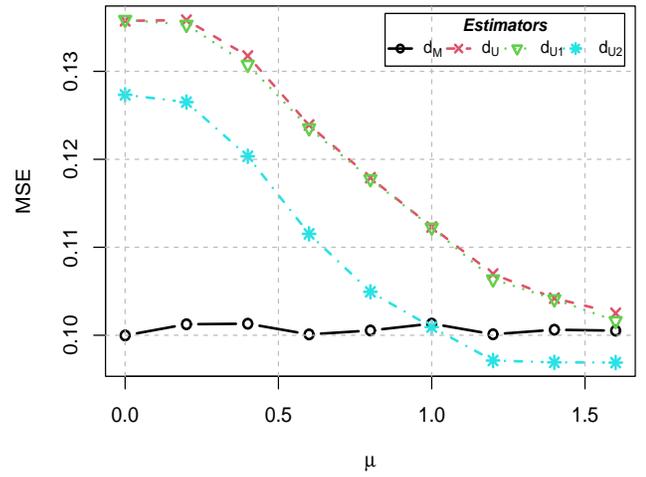}
			\caption{$n_1=5, n_2=5$}
	\end{subfigure}
	\begin{subfigure}[b]{0.4\textwidth}
		\centering
		\includegraphics[width=9cm,height=8cm]{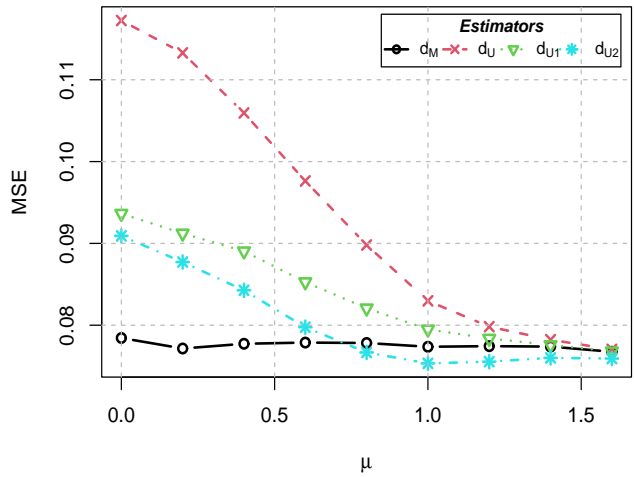}
		\caption{$n_1=8, n_2=5$}
	\end{subfigure}
	\hfill
	\begin{subfigure}[b]{0.4\textwidth}
		\centering
		\includegraphics[width=9cm,height=8cm]{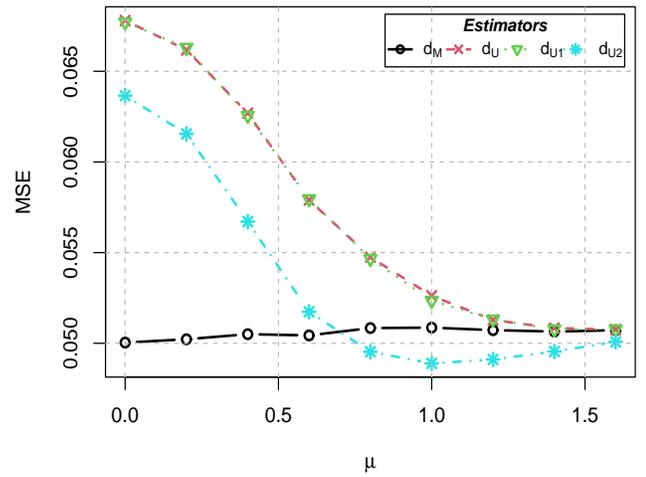}
		\caption{$n_1=10, n_2=10$}
	\end{subfigure}
	\caption{\textbf{Scaled MSE plots of various competing estimators for different configurations of $n_1$ and $n_2$}}
\end{figure}
 \FloatBarrier
 
 \FloatBarrier
 \begin{figure}
 	\centering
 	\begin{subfigure}[b]{0.4\textwidth}
 		\centering
 		\includegraphics[width=9cm,height=8cm]{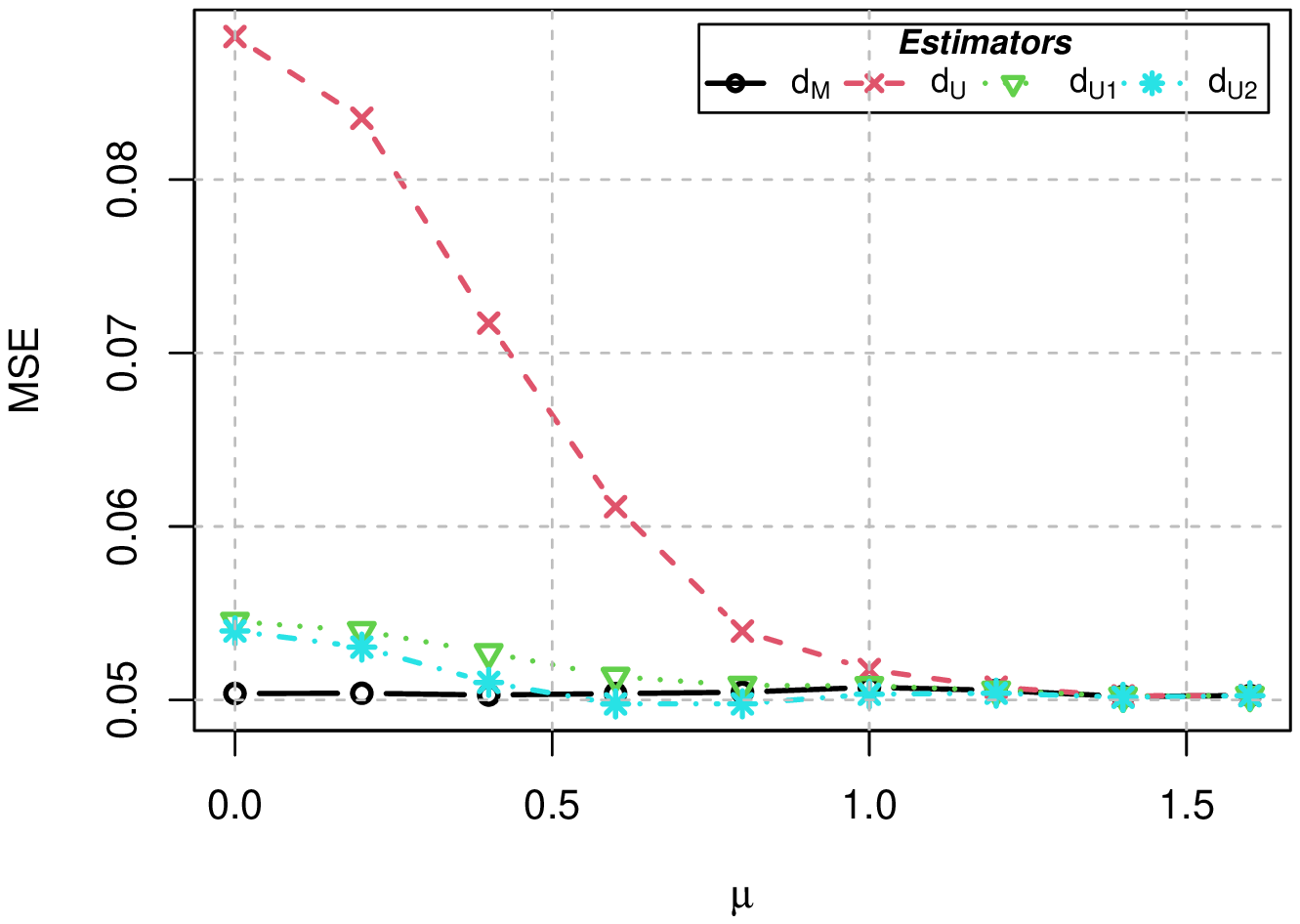}
 		\caption{$n_1=15, n_2=5$}
 	\end{subfigure}
 	\hfill
 	\begin{subfigure}[b]{0.4\textwidth}
 		\centering
 		\includegraphics[width=9cm,height=8cm]{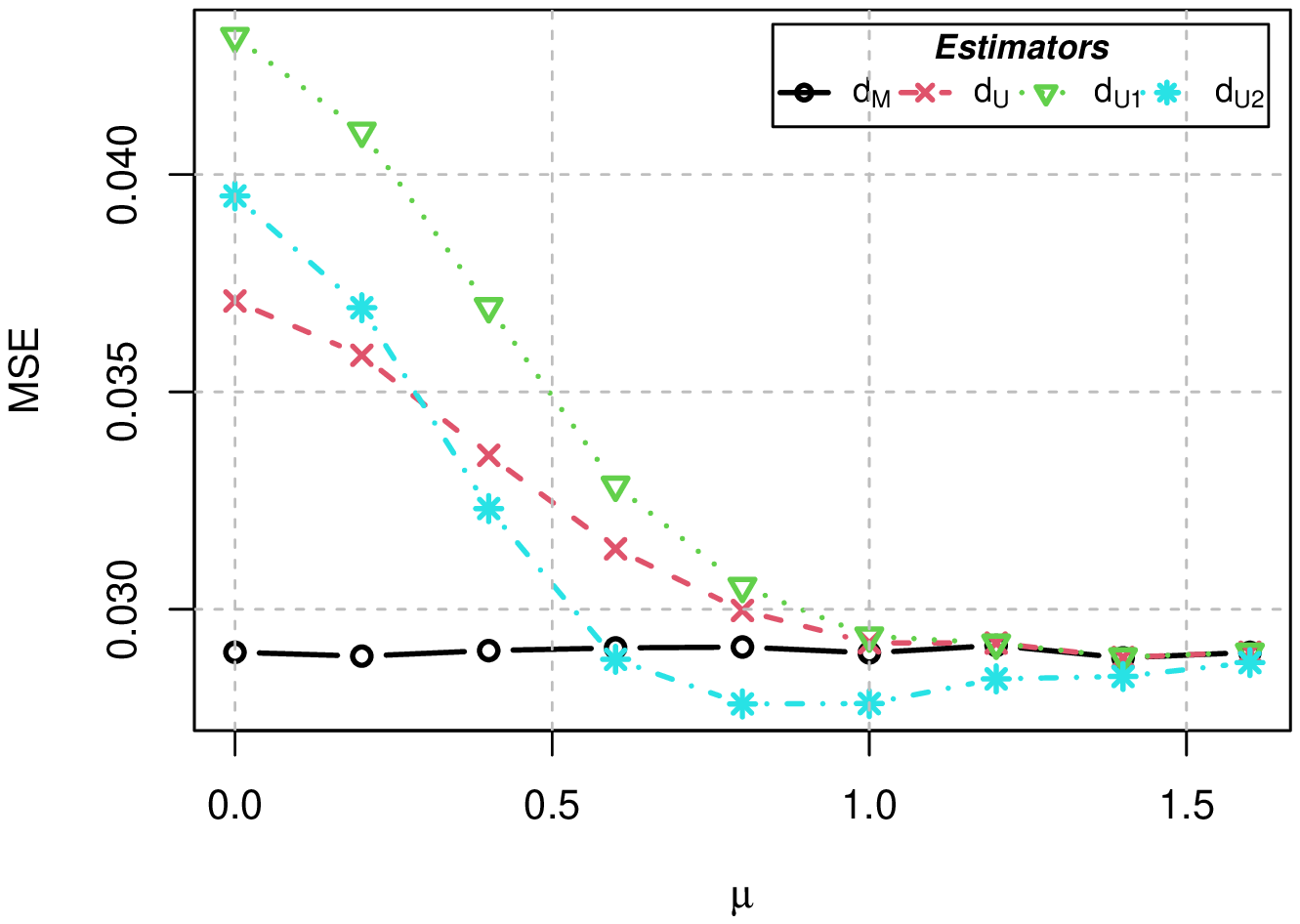}
 		\caption{$n_1=15, n_2=20$}
 	\end{subfigure}
 	\begin{subfigure}[b]{0.4\textwidth}
 		\centering
 		\includegraphics[width=9cm,height=8cm]{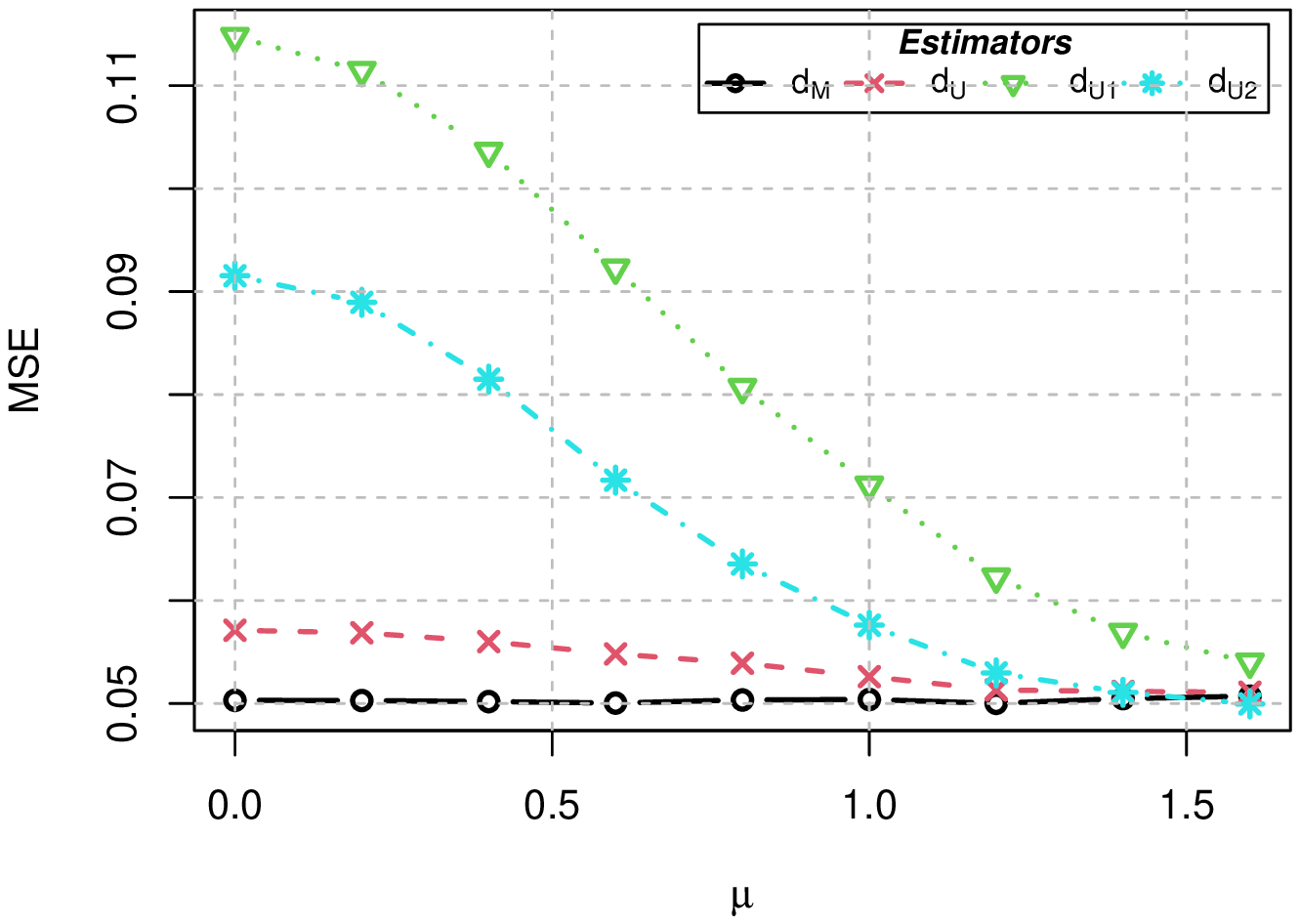}
 		\caption{$n_1=5, n_2=15$}
 	\end{subfigure}
 	\hfill
 	\begin{subfigure}[b]{0.4\textwidth}
 		\centering
 		\includegraphics[width=9cm,height=8cm]{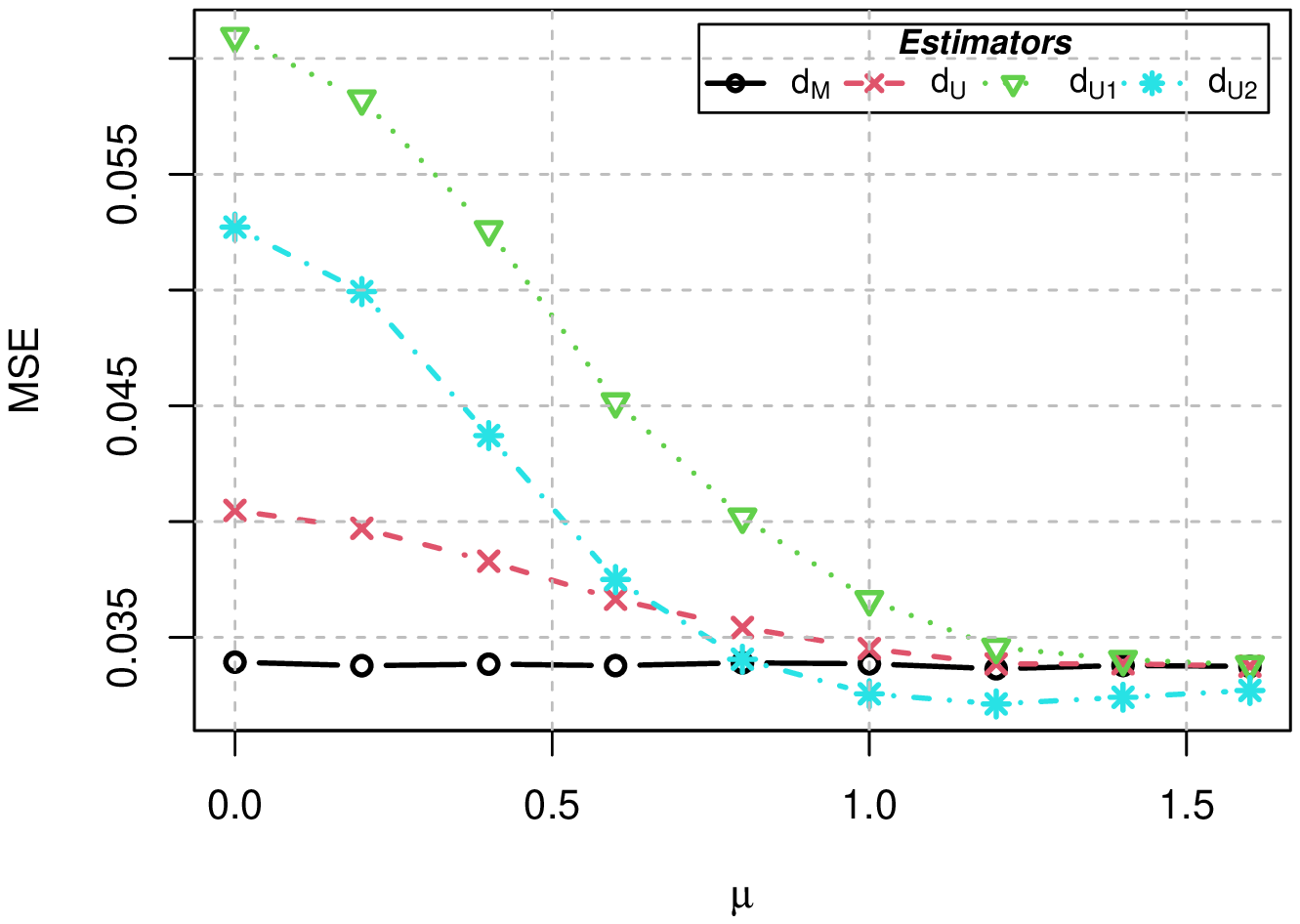}
 		\caption{$n_1=10, n_2=30$}
 	\end{subfigure}
 	\caption{\textbf{Scaled MSE plots of various competing estimators for different configurations of $n_1$ and $n_2$}}
 \end{figure}
 \FloatBarrier

\FloatBarrier
\begin{figure}[!h]
	\centering
	\includegraphics[width=4.3in,height=3.4in]{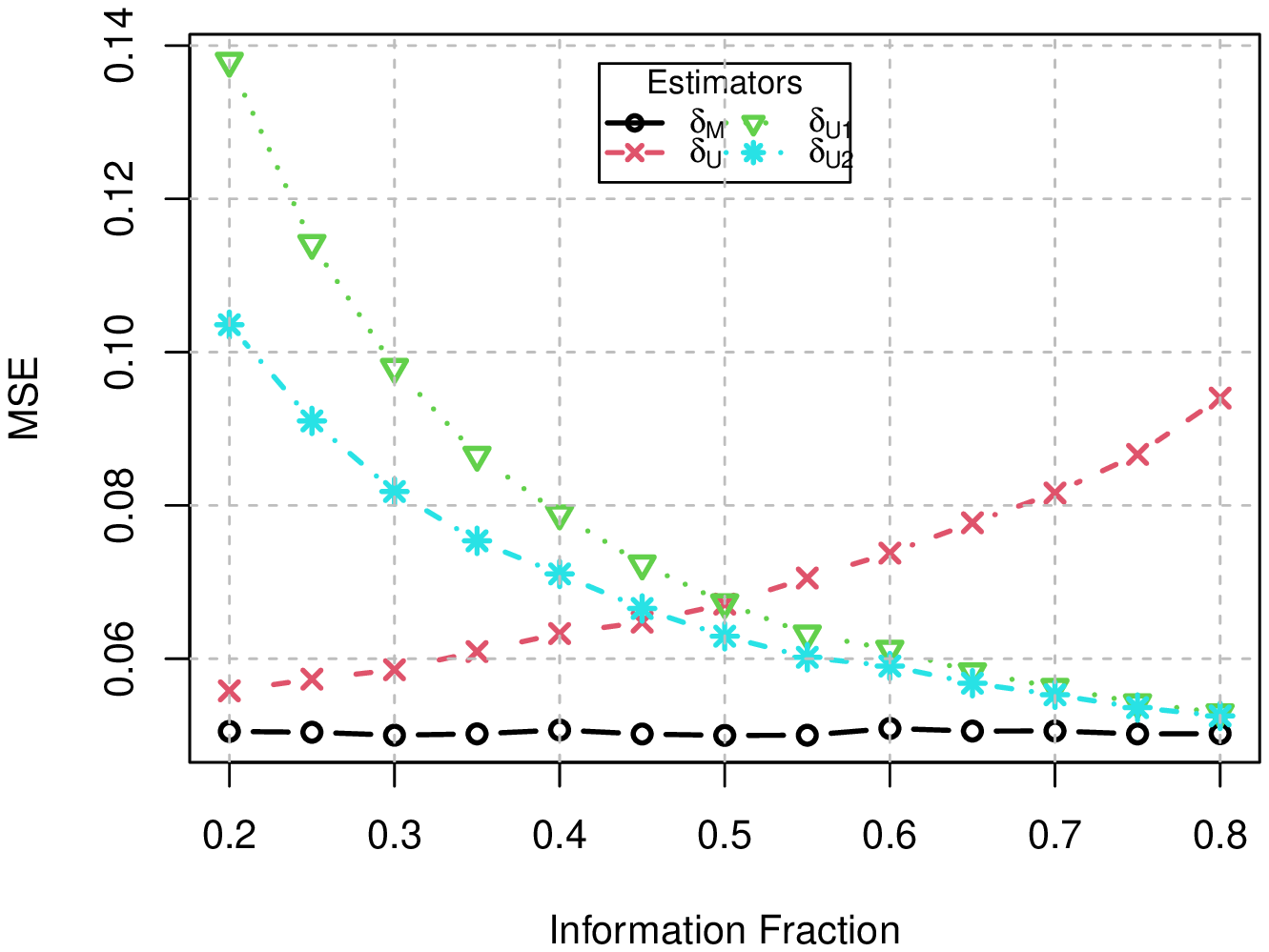}
	\caption{\textbf{Scaled MSE plots of different competing estimators for fixed $n=n_1+n_2=20$ and $\mu=0.1$.}}
\end{figure}
\begin{figure}[!h]
	\centering
	\includegraphics[width=4.3in,height=3.4in]{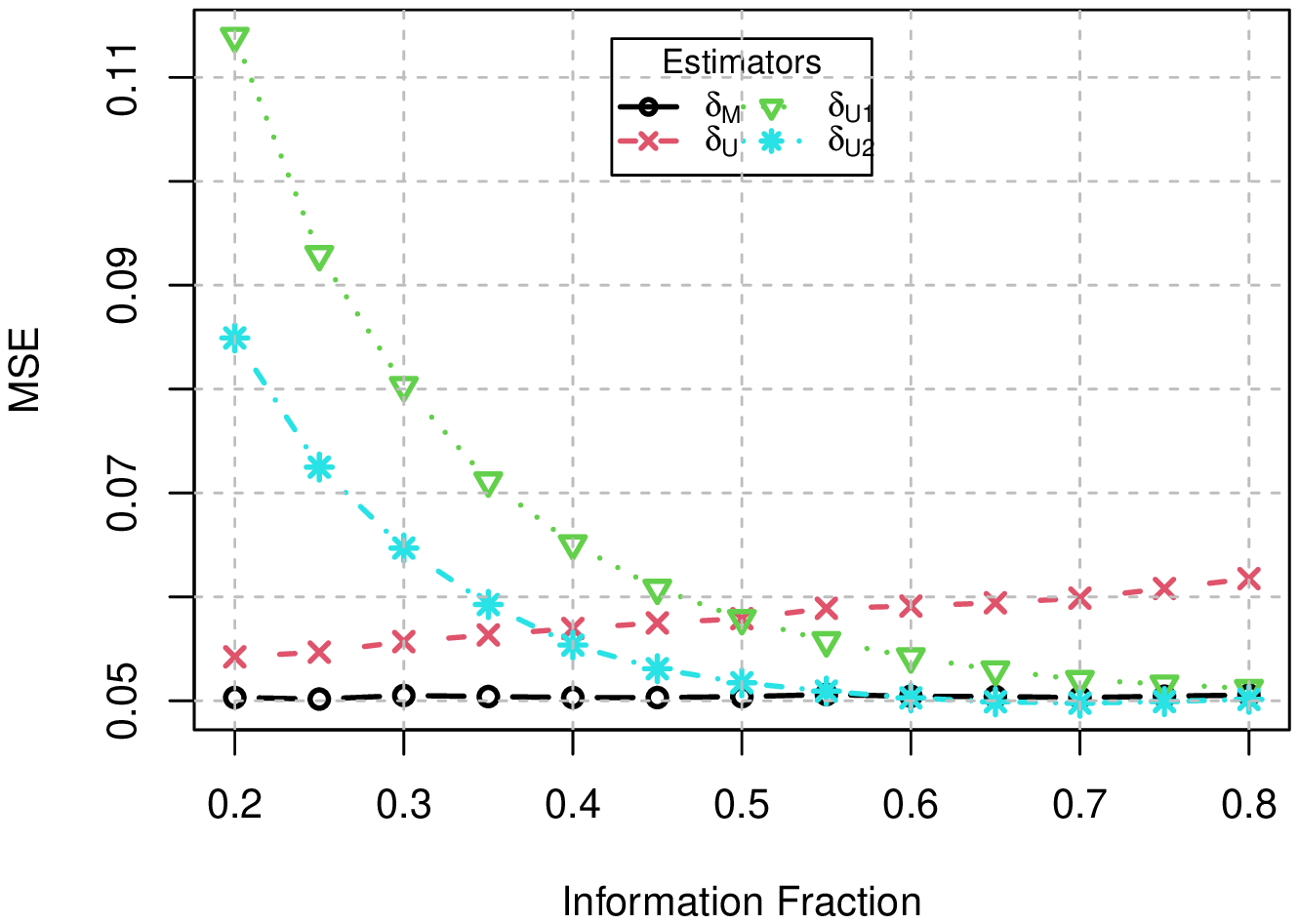}
	\caption{\textbf{Scaled MSE plots of different competing estimators for for fixed $n=n_1+n_2=20$ and $\mu=0.6$.}}
\end{figure}
\FloatBarrier
\FloatBarrier
\begin{figure}[!h]
	\centering
	\includegraphics[width=4.3in,height=3.4in]{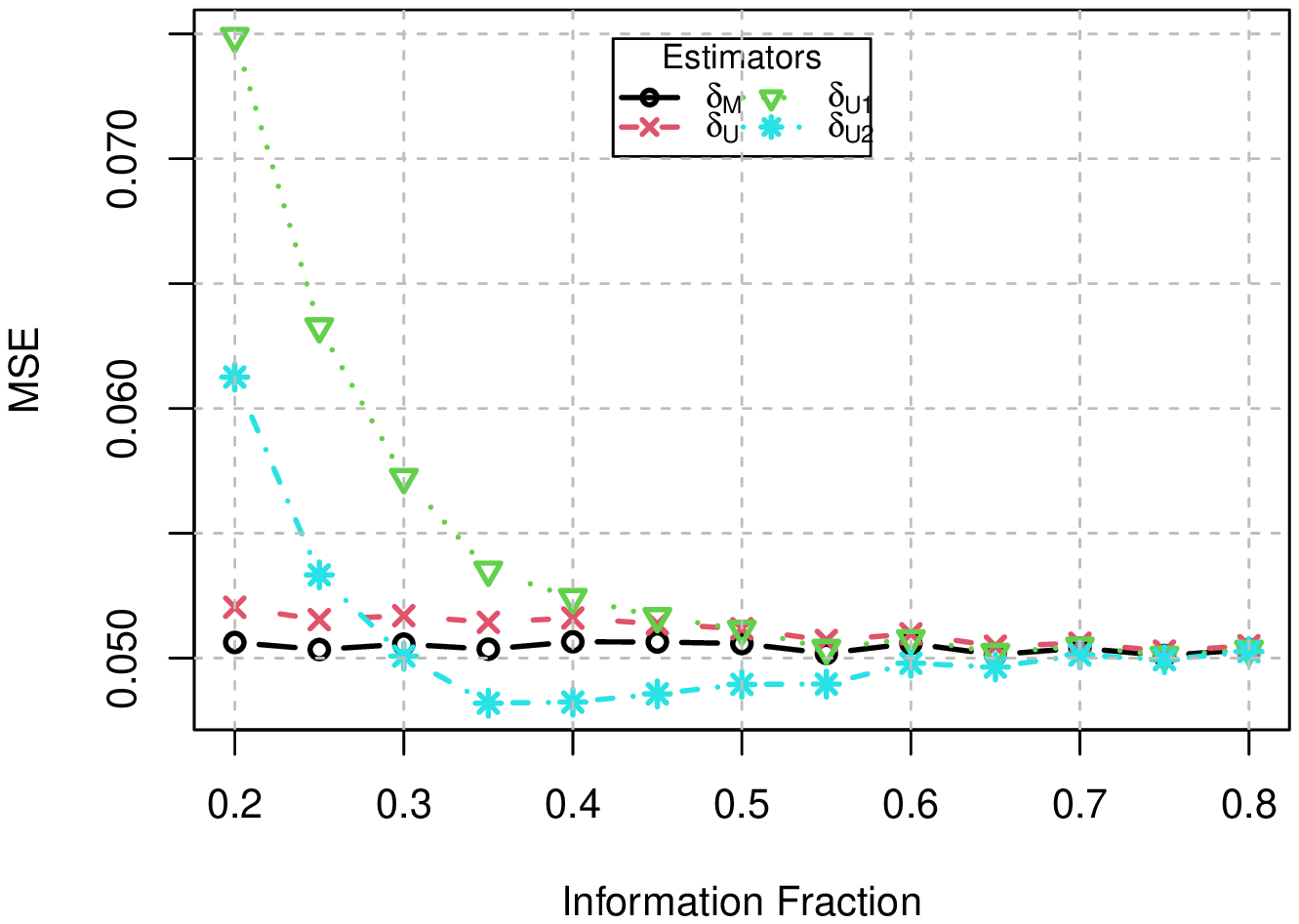}
	\caption{\textbf{Scaled MSE plots of different competing estimators for for fixed $n=n_1+n_2=20$ and $\mu=1.2$.}}
\end{figure}
\begin{figure}[!h]
	\centering
	\includegraphics[width=4.3in,height=3.4in]{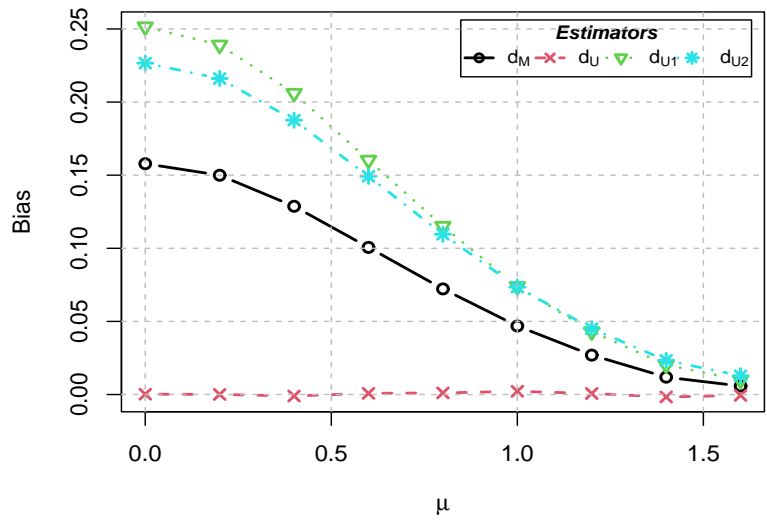}
	\caption{\textbf{Scaled Bias plots of different competing estimators for $n_1=5,n_2=3$.}}
\end{figure}
\FloatBarrier
\begin{figure}
	\centering
	\begin{subfigure}[b]{0.4\textwidth}
		\centering
		\includegraphics[width=9cm,height=8cm]{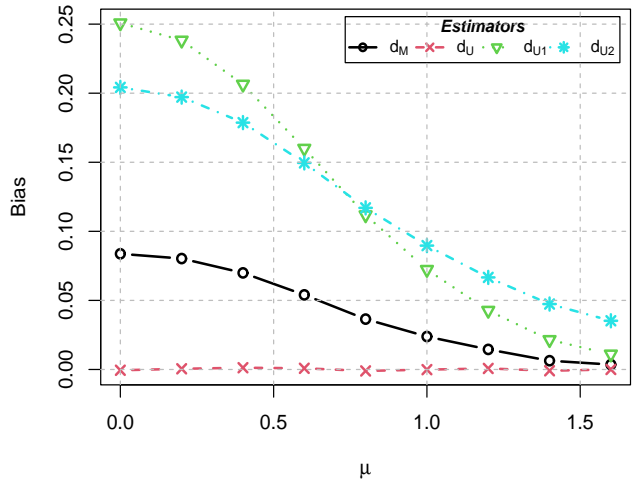}
		\caption{$n_1=5, n_2=5$}
	\end{subfigure}
	\hfill
	\begin{subfigure}[b]{0.4\textwidth}
		\centering
		\includegraphics[width=9cm,height=8cm]{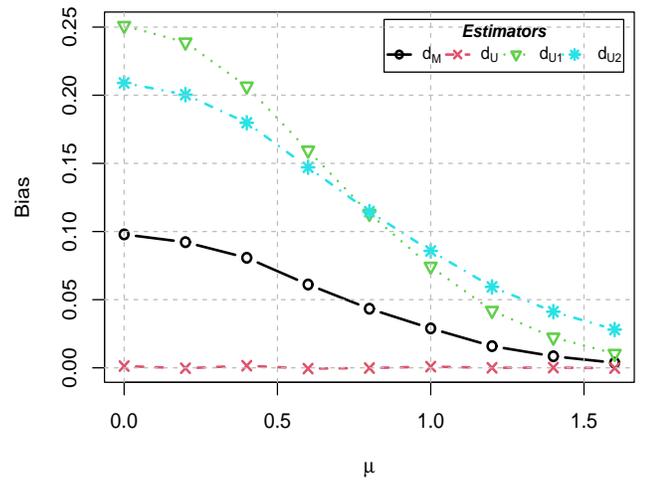}
		\caption{$n_1=5, n_2=8$}
	\end{subfigure}
	\begin{subfigure}[b]{0.4\textwidth}
		\centering
		\includegraphics[width=9cm,height=8cm]{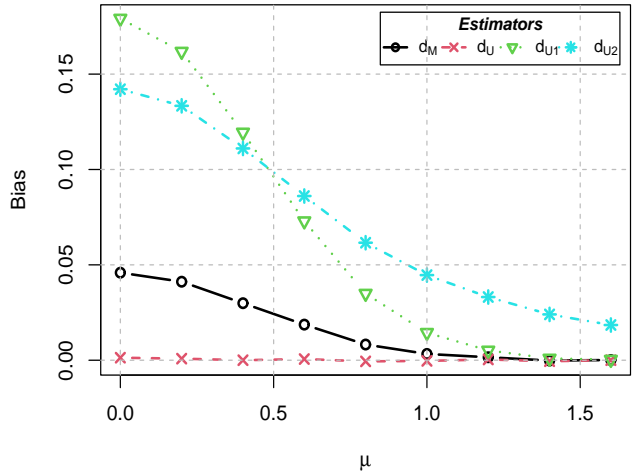}
		\caption{$n_1=10, n_2=30$}
	\end{subfigure}
	\hfill
	\begin{subfigure}[b]{0.4\textwidth}
		\centering
		\includegraphics[width=9cm,height=8cm]{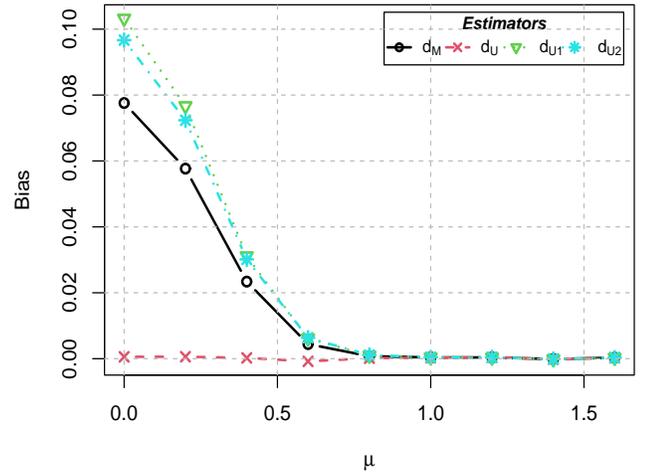}
		\caption{$n_1=30, n_2=10$}
	\end{subfigure}
	\caption{\textbf{Scaled Bias plots of various competing estimators for different configurations of $n_1$ and $n_2$}}
\end{figure}
\FloatBarrier
 \section{Real data example}
 In this section, we provide an illustration of the theoretical findings of our paper to a data set.  The details of the data set can be accessed using the link: \url{https://vincentarelbundock.github.io/Rdatasets/doc/Stat2Data/FatRats.html}. The data is presented in Table 1 below. Data from this experiment compared weight gain for 60 baby rats that were fed different diets. Half of the rats were given low-protein diets and the rest were supplied high-protein diet. The source of protein was either beef, cereal, or pork. \par 
 Using the Shapiro-Wilk normality test with a $p$- value of 0.834 for high protein diet and 0.771 for low protein diet, we conclude that the underlying populations of weight gains of the baby rats receiving high protein diet and low protein diet are approximately Gaussian. The assumption of equality of variance of the two populations is also accepted with a $p$- value of 0.631 by using the F test. So, the data can be considered to have come from two normal populations $N(\mu_{1},\sigma^2)$ and $N(\mu_{2},\sigma^2)$, with $\left(\widehat{\mu_{1}},\widehat{\mu_{2}}\right)=\left(92.5,81.6\right)$.\par 
 To select the effective protein diet (which provides the largest weight gain), we extract a sample of size $n_1=20$ from the each population group and calculate $\overline{X}_1$ and $\overline{X}_2$. If $\overline{x}_1 > \overline{x}_2 $, we select the population corresponding to the high protein diet and, if $\overline{x}_2 \geq \overline{x}_1 $, we select the population corresponding to the low protein diet. In stage 2, we draw an additional sample of size $n_2=10$ from the selected population in stage 1, and calculate the stage 2 sample mean $\overline{Y}$. Finally, we compute the estimates $d_{M}$, $d_{U}$, $d_{U1}$ and $d_{U2}$ based on the calculated values of $\overline{X}_1$, $\overline{X}_2$, ${S}^2$  and $\overline{Y}$ .

 \FloatBarrier
 \begin{table}[h!]
 	\centering
 	\textbf{Table 1}\\
 	\textbf{Weight gain in rats after being fed with high-protein diets.}\\\textbf{Stage I data}
 	\begin{tabular}{lllllllllll}
 		73 & 102 & 118 &  & 104 & 81 &  & 107 & 100 &  &   \\
 		87 & 117 & 111 &  & 98 & 74 &  &  56 & 111 &  &   \\
 		95 &  88 & 82 &  &  77&  86 &  &  92 &  &  &   \\
 		
 	\end{tabular}

 \end{table}
 \vskip -0.2in
\begin{table}[h!]
 	\centering
 	\textbf{Weight gain in rats after being fed with low-protein diets.}\\	\textbf{Stage I data}
 	\begin{tabular}{lllllllllll}
 		90& 76 & 90 & &64 &  86 & &51& 72& &   \\
 		90 & 95 & 78 & &107 & 107 & &97 & 80  & &   \\
 		 98 &  74 &  74 & &67 & 89 & &58 &  &  &   \\
 	
 	\end{tabular}
 \end{table}
 \begin{table}[h!]
 	\centering
	\textbf{Weight gain in rats after being fed with high-protein diets.}\\\textbf{Stage II data}
 	\begin{tabular}{lllllllllll}
		94 & 79 & 96 &  & 98 & 102 &  & 102 & 108 &  &   \\
 		91 & 120 & 105 &  &  &  &  &  & &  &   \\

 	\end{tabular}
 
 \end{table}
 \FloatBarrier
 The various two-stage estimates of the selected treatment mean $\mu_{Q}$ are tabulated in Table 2 below:
 \begin{center}
 	\textbf{Table 2:} \textbf{Various estimates of the selected treatment mean $\mu_{Q}$}.
 	\label{tab:table1}\\
 	\begin{tabular}{|l|c|c|c|} \hline
 		\textbf{$d_{M}$} &  \textbf{$d_{U}$} &\textbf{$d_{U1}$} & \textbf{$d_{U2}$}   \\
 		
 		\hline
 		95.13 & 91.23& 88.34 & 89.34\\ \hline
 		
 	\end{tabular}
 \end{center}
From various estimates of the selected treatment mean given in Table 2, we notice that the two-stage adaptive estimates $d_M$ and $d_U$
are near the true value of the mean of the selected treatment,
i.e., $\mu_Q$ = 92.5. 

\section{Concluding Remarks}
In this article, we investigated the problem of estimating the mean of the selected treatment under a two-stage adaptive design. We have extended the work of \cite{cohen1989two} by considering availability of multiple observations at the second stage.  \par 
In the unknown variance setting, we have derived the UMVCUE of the selected treatment mean and have shown that the MLE (a weighted
average of the first and second stage sample means, with weights being proportional
to the corresponding sample sizes) is minimax and admissible. We have also proposed two plug-in estimators ($d_{U1}$ and $d_{U2}$) obtained by plugging in the pooled sample variance in place of the common variance $\sigma^2$, in some of the estimators proposed by \cite{misra2022estimation}.
Using simulations we have tried to investigate the strengths and weaknesses of these four estimators under different trial designs.

\bibliographystyle{apalike}
\bibliography{ref3}

\end{document}